\newtheorem{satz}{Theorem}
\newtheorem{proposition}[satz]{Proposition}
\newtheorem{theorem}[satz]{Theorem}
\newtheorem{lemma}[satz]{Lemma}
\newtheorem{definition}[satz]{Definition}
\newtheorem{corollary}[satz]{Corollary}
\newtheorem{remark}[satz]{Remark}
\def\no{\noindent}
\def\eps{\varepsilon}
\def\_phi{\varphi}
\def\a{\alpha}
\def\d{\delta}
\def\F{{\mathbb F}}
\def\o{\omega}
\def\ov{\overline}
\def\C{{\mathbb C}}
\def\E{\mathsf {E}}
\def\T{{\mathbb T}}
\def\Z_N{{\mathbb Z}_N}
\def\Z{{\mathbb Z}}
\def\N{{\mathbb N}}
\def\Gr{{\mathbf G}}
\def\D{{\mathbb D}}
\def\l{\left}
\def\r{\right}
\def\Spec{{\rm Spec\,}}
\def\oM{{\rm M}}
\def\oT{{\rm T}}
\def\G{\Gamma}
\def\FF{\widehat}
\def\c{\circ}
\def\D{\Delta}
\def\Cf{{\mathcal C}}
\def\T{\mathsf {T}}
\author{Shkredov I.D.}
\title{ On exponential sums over multiplicative subgroups of medium  size
\footnote{
This work was supported by grant RFFI NN
11-01-00759, Russian Government project 11.G34.31.0053,
Federal Program "Scientific and scientific--pedagogical staff of innovative Russia" 2009--2013,
grant mol\underline{ }a\underline{ }ved 12--01--33080
and
grant Leading Scientific Schools N 2519.2012.1.}
}
\date{}
\begin{document}
\maketitle

\begin{center}
 Annotation.
\end{center}

{\it \small
    In the paper we obtain some new upper bounds for exponential sums over multiplicative
    subgroups $\G \subseteq \F^*_p$ having sizes in the range $[p^{c_1}, p^{c_2}]$,
    where $c_1,c_2$ are some absolute constants close to $1/2$.
    As an application we prove that in symmetric case $\G$  is always an additive basis of order five,
    provided by $|\G| \gg p^{1/2} \log^{1/3} p$.
    Also the method allows us to give a new upper bound for Heilbronn's exponential sum.
}
\\

\section{Introduction}
\label{sec:introduction}

Let $p$ be a prime number, $\F_p$ be the finite field, and $\F^*_p = \F_p \setminus \{0\}$.
Let $\G \subseteq \F^*_p$ be a multiplicative subgroup.
Such subgroups were studied by various authors
(see e.g. \cite{Bou_prod1,Bourgain_new_sum-prod,BGK,BK,Waring_Z_p,Hart_A+A_subgroups,H-K,K_Tula,KS1,s_ineq,Shp-DH}).
One of the main questions in the field is to give a good upper bound for the exponential sums
over multiplicative subgroups.
More precisely, denote by $\oM(\G)$ the maximal nonzero Fourier coefficient over $\G$, that is
$$
    \oM(\G) := \max_{\xi \neq 0} \left| \sum_{x\in \G} e^{\frac{2 \pi i x \xi}{p}} \right| \,.
$$
So, what can we say nontrivial about the quantity $\oM(\G)$?
The question was studied both analytical (e.g. \cite{KS1}) and combinatorial tools (e.g. \cite{BGK}).
One of the main results of the paper is the following.

\begin{theorem}
    Let $\G \subseteq \F_p$ be a multiplicative subgroup, $|\G| \le p^{2/3}$.
    Then
\begin{equation}\label{f:M_2/3_intr}
    \oM (\G) \ll |\G|^{1/2} p^{1/6} \log^{1/6} |\G| \,.
\end{equation}
\end{theorem}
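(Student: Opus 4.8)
The plan is to estimate the sixth moment $\sum_{\xi}|S(\xi)|^6$, where $S(\xi)=\sum_{x\in\G}e_p(x\xi)$ with $e_p(t)=e^{2\pi i t/p}$, and to combine it with the coset structure of $\G$. Put $\sigma=\oM(\G)=\max_{\xi\neq 0}|S(\xi)|$. Since $\G$ is a subgroup, for $t\in\G$ one has $S(t\xi)=\sum_{x\in\G}e_p(xt\xi)=S(\xi)$, so $|S(\cdot)|$ is constant on every coset of $\G$ in $\F^*_p$; in particular, the coset $\xi_0\G$ on which the maximum is attained consists of $|\G|$ frequencies, each contributing $\sigma^6$, so that $\sum_{\xi\neq 0}|S(\xi)|^6\gs |\G|\,\sigma^6$. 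On the other hand, expanding the sixth power and using orthogonality, $\sum_{\xi}|S(\xi)|^6=p\,\mathsf{T}_3(\G)$, where $\mathsf{T}_3(\G)$ is the number of solutions of $x_1+x_2+x_3=x_4+x_5+x_6$ with all $x_i\in\G$; removing the $\xi=0$ term $|\G|^6$ gives $\sum_{\xi\neq0}|S(\xi)|^6=p\bigl(\mathsf{T}_3(\G)-|\G|^6/p\bigr)$. Writing $r_3(s)=|\{(a,b,c)\in\G^3:\ a+b+c=s\}|$, the quantity $\t{\mathsf{T}}_3(\G):=\mathsf{T}_3(\G)-|\G|^6/p=\sum_{s}\bigl(r_3(s)-|\G|^3/p\bigr)^2$ is exactly the non-trivial part of the third energy, i.e.\ what survives after removing the uniform main term forced by $\sum_s r_3(s)=|\G|^3$. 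Combining the two bounds yields
\begin{equation*}
    \sigma^6 \;\le\; \frac{p}{|\G|}\,\t{\mathsf{T}}_3(\G)\,.
\end{equation*}

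Thus everything reduces to the estimate $\t{\mathsf{T}}_3(\G)\ll |\G|^4\log|\G|$ for $|\G|\le p^{2/3}$, and this is the heart of the matter and the step I expect to be the main obstacle. Note that the naive main term $|\G|^6/p$ of $\mathsf{T}_3(\G)$ is itself as large as $|\G|^{9/2}$ at the top of the range, so one genuinely has to work with the corrected quantity $\t{\mathsf{T}}_3$ rather than with $\mathsf{T}_3$ itself; the point is that a multiplicative subgroup cannot carry additive structure beyond the trivial one, and it is precisely the sum--product phenomenon that should force the excess down. Concretely, I would bound $\t{\mathsf{T}}_3(\G)$ by relating it to the additive energy $\E(\G)$ and to the popularity distribution of the sums/differences of $\G$, and then feed in an incidence-type input (a point--plane incidence bound in the spirit of Rudnev, or the eigenvalue/operator method for the convolution operator attached to $\G$) to control the number of rich sums. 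The multiplicative invariance $\lambda\G=\G$ for $\lambda\in\G$ is then used to amplify any one additive coincidence into $|\G|$ of them, pushing the relevant count down to the level of the diagonal solutions, i.e.\ $\ll|\G|^4$ up to a single logarithmic factor coming from a dyadic decomposition over the sizes of $r_3(s)$ (equivalently, over popularity levels of differences).

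Granting the bound $\t{\mathsf{T}}_3(\G)\ll|\G|^4\log|\G|$, the theorem follows at once: $\sigma^6\ll \frac{p}{|\G|}\cdot|\G|^4\log|\G|=p\,|\G|^3\log|\G|$, hence $\oM(\G)=\sigma\ll|\G|^{1/2}p^{1/6}\log^{1/6}|\G|$, which is \eqref{f:M_2/3_intr}. As a consistency check on the exponents: at $|\G|=p^{2/3}$ the bound reads $\oM(\G)\ll p^{1/2}\log^{1/6}p$, which matches, up to the logarithm, the $|\G|^{3/4}$ threshold, as one should expect; and for smaller subgroups the argument above via the fourth moment ($\mathsf{T}_2=\E$) would only give $\oM(\G)\ll p^{1/4}|\G|^{3/8}$, which is why the sixth moment, and hence the sharp bound on $\t{\mathsf{T}}_3(\G)$, is needed. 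In short, the reduction is soft and the only essential new ingredient is the estimate for the truncated third energy $\t{\mathsf{T}}_3(\G)$.
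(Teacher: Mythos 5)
Your soft reduction is correct as far as it goes: $|\FF{\G}|$ is constant on cosets of $\G$, so $\oM^6(\G)\le \frac{p}{|\G|}\t{\T}_3(\G)$ with $\t{\T}_3(\G)=\T_3(\G)-|\G|^6/p=\frac1p\sum_{\xi\neq 0}|\FF{\G}(\xi)|^6$. But the entire weight of the argument then rests on the claim $\t{\T}_3(\G)\ll|\G|^4\log|\G|$, and this you do not prove; you only gesture at "incidence-type input'' and an amplification by multiplicative invariance. That claim is not available: the best known bound for the triple sum count is $\T_3(\G)\ll|\G|^{17/4}$ (Theorem \ref{t:3_d_moment}, formula (\ref{f:T_d}) with $d=3$), and for $|\G|\le\sqrt p$ your claim would improve this to the essentially optimal $|\G|^{4}\log|\G|$, which is an open problem (it is not implied by the theorem you are trying to prove, nor by the paper's moment bound (\ref{f:13/2}): at $|\G|\sim p^{1/2}$ those only give $\t{\T}_3(\G)\ll p^{2.1+o(1)}$, above $|\G|^4=p^2$). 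Feeding the known $\T_3(\G)\ll|\G|^{17/4}$ into your inequality yields only $\oM(\G)\ll|\G|^{13/24}p^{1/6}$, which falls short of (\ref{f:M_2/3_intr}) by a factor $|\G|^{1/24}$. So the "heart of the matter'' you defer is genuinely missing, and it cannot be filled by the tools you name without solving a problem harder than the theorem itself.

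The paper's proof avoids $\T_3$ altogether. The quantity that Stepanov's method does control at the optimal level is the third energy of the \emph{difference} autocorrelation, $\E_3(\G)=\sum_x(\G\c\G)(x)^3\ll|\G|^3\log|\G|$ (Corollary \ref{cor:E_l}), which is a different object from $\T_3(\G)=\sum_x(\G*\G*\G)(x)^2$. Starting, like you, from the maximizing coset, one writes $\oM^2(\G)=\langle \oT^{\FF{\xi\G}}_\G\, \G/t^{1/2},\G/t^{1/2}\rangle$ and applies the operator/eigenvalue inequality of Proposition \ref{p:new_weights} with $\varphi=\psi=\xi\G$; this bounds $\oM^{12}\le \E_3^2(\G)\,p^2\ll|\G|^6p^2\log^2|\G|$, which is exactly (\ref{f:M_2/3_intr}). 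In short: your reduction replaces the accessible quantity $\E_3(\G)$ by the inaccessible $\T_3(\G)$, and the step you flag as the main obstacle is indeed where the proposal breaks down.
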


Actually, we obtain a new estimate for the moments of such exponential sums, see inequality (\ref{f:13/2})
as well as a more general bound for sums over arbitrary multiplicative character
of subgroup (\ref{f:char_6}), (\ref{f:char_6'}).
Our estimate  (\ref{f:M_2/3_intr}) is better than the previous bounds
in the range $|\G| \in (p^{52/141}, p^{29/48})$, roughly,
and this explains the name of the paper.

There is a well--known  conjecture that the sumset $2\G := \G + \G$ contains $\F^*_p$, provided by $|\G| > p^{1/2+\eps}$,
where $\eps>0$ is any number.
We consider the question from an inverse perspective.
Let $|\G| > p^{1/2+\eps}$; what is the smallest $k$ such that $k\G$ contains $\F_p^*$?
A.A. Glibichuk proved in \cite{Glibichuk_zam} that $k$ can be taken equals $8$.
After that several authors (see \cite{Hart_A+A_subgroups,ss,ss_E_k,s_ineq,sv})
prove that $k=6$ is possible and, actually, the condition $|\G| > p^{1/2+\eps}$ can be relaxed slightly.
The subgroups of cardinality near $\sqrt{p}$ are exactly what we called "medium"\, size, so
we can say something nontrivial on the question.
That is our second result.

\begin{theorem}
    Let $\Gamma \subseteq \F_p^*$ be a multiplicative subgroup such that $|\Gamma| \gg p^{1/2} \log^{1/3} p$,
    and $-1 \in \G$.
    Then for all sufficiently large $p$, we have $\F^*_p \subseteq 5\Gamma$.
\label{t:6R_subgroups_intr}
\end{theorem}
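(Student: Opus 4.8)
The plan is to run the Hardy--Littlewood circle method over $\F_p$ and feed it the moment estimates for exponential sums over $\Gamma$ that were proved earlier in the paper. For $n\in\F_p^*$ put $S(\xi)=\sum_{\gamma\in\Gamma}e^{2\pi i\gamma\xi/p}$ and
$$
r(n)=\#\{(\gamma_1,\dots,\gamma_5)\in\Gamma^5:\gamma_1+\cdots+\gamma_5=n\}
=\frac{|\Gamma|^5}{p}+\frac1p\sum_{\xi\neq0}S(\xi)^5\,e^{-2\pi i n\xi/p}\,;
$$
it suffices to prove that the minor-arc term has absolute value $<|\Gamma|^5/p$ for every $n\neq0$. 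Two features of the problem will be exploited. First, $|S(\xi)|$ is constant on cosets $\xi\Gamma$, so the $\xi$-sum effectively runs over $(p-1)/|\Gamma|$ cosets. Second, and this is where $-1\in\Gamma$ is used: $\Gamma=-\Gamma$, so $S(\xi)\in\R$, the set $B:=\F_p^*\setminus5\Gamma$ (if nonempty) is a union of $\Gamma$-cosets and hence has $|B|\ge|\Gamma|$, and moreover $B=-B$. Accordingly I would argue by contradiction: assume $B\neq\emptyset$ and use
$$
0=\sum_{n\in B}r(n)=\frac{|B|\,|\Gamma|^5}{p}+\frac1p\sum_{\xi\neq0}S(\xi)^5\,\widehat{\mathbb{1}_B}(-\xi),
$$
so that $|B|\,|\Gamma|^5/p=\bigl|p^{-1}\sum_{\xi\neq0}S(\xi)^5\widehat{\mathbb{1}_B}(-\xi)\bigr|$; the gain over a pointwise treatment of $r(n)$ is exactly the extra factor $|B|\ge|\Gamma|$ together with $\sum_\xi|\widehat{\mathbb{1}_B}(\xi)|^2=p|B|$.

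The core of the argument is bounding $\sum_{\xi\neq0}S(\xi)^5\widehat{\mathbb{1}_B}(-\xi)$. The crude estimate $\ll\oM(\Gamma)^3\sum_\xi|S(\xi)|^2=\oM(\Gamma)^3(p-1)|\Gamma|$, combined with Theorem~1, only yields the claim for $|\Gamma|\gg p^{3/5}$, and even replacing $\oM(\Gamma)^2|\Gamma|$ by the additive--energy bound $\E(\Gamma)\ll|\Gamma|^{5/2}$ (for $|\Gamma|\le p^{2/3}$) stops well short of $p^{1/2}$ --- this is precisely why six summands were needed in the previously known results. Instead I would apply H\"older's inequality to split off a high power of $|S|$: writing $\sum_{\xi\neq0}|S|^5|\widehat{\mathbb{1}_B}|\le\bigl(\sum_{\xi\neq0}|S|^{q}\bigr)^{\theta}\bigl(\sum_{\xi\neq0}|\widehat{\mathbb{1}_B}|^{q'}\bigr)^{1-\theta}$ and estimating the second factor by Parseval and the trivial bound $|\widehat{\mathbb{1}_B}(\xi)|\le|B|$, the first factor becomes a moment $\sum_{\xi\neq0}|S(\xi)|^{q}$ of the exponential sum over $\Gamma$ --- exactly the quantity controlled by the new estimate (inequality (\ref{f:13/2}), with its multiplicative--character analogues (\ref{f:char_6}), (\ref{f:char_6'})), which in the range $|\Gamma|\le p^{2/3}$ beats everything obtainable by interpolating integer moments through $\oM(\Gamma)$ and $\E(\Gamma)$. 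The symmetry $-1\in\Gamma$ re-enters in proving, or at least in efficiently using, that moment bound: it lets one trade part of $|S|^q$ against a second moment twisted by the difference set $\Gamma-\Gamma=2\Gamma$, whose weight is governed by $\E(\Gamma)\ll|\Gamma|^{5/2}$ rather than by the larger $\oM(\Gamma)^3|\Gamma|$.

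Putting it together, the inequality $|B|\,|\Gamma|^5/p\le(\text{the above bound})$, after inserting $|B|\ge|\Gamma|$, Theorem~1, $\E(\Gamma)\ll|\Gamma|^{5/2}$, and the new moment estimate, should collapse to a relation of the form $|\Gamma|^{A}\ll p^{B}\log^{C}p$ with $A/B$ equal to $2$ and $C$ such that it is violated as soon as $|\Gamma|\gg p^{1/2}\log^{1/3}p$ --- the power $1/3$ of the logarithm being the image of the $\log^{1/6}$ in $\oM(\Gamma)$ (and of the logs in the moment bound) under the optimization. The main obstacle is this final optimization at the critical scale $|\Gamma|\asymp p^{1/2}\log^{1/3}p$: the circle-method error only becomes manageable when the new subgroup moment inequality, the energy bound, the $\Gamma$-invariance and $-1$-symmetry of $B$, and Parseval are all used at full strength and combined in precisely the right order; any loss --- even a stray logarithm --- pushes the threshold above $p^{1/2}\log^{1/3}p$, and conversely it is this sharp balance that lets one descend from six summands to five. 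A minor point is uniformity in $n$ and the behaviour of those $\xi$ with $n\xi\in\Gamma$, but since $B$ is a union of cosets this causes no trouble, and the small values of $p$ are covered by the phrase ``for all sufficiently large $p$'' in the statement.
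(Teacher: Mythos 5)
There is a genuine gap, and it sits exactly where you flag ``the main obstacle'': the final optimization at the critical scale cannot be closed with the tools you list. Your scheme bounds the error term $\sum_{\xi\neq 0} S(\xi)^5\,\widehat{1_B}(-\xi)$ using only moments and sup--bounds of $|S(\xi)|=|\FF{\G}(\xi)|$ and of $\widehat{1_B}$ (namely $\oM(\G)\ll|\G|^{1/2}p^{1/6}\log^{1/6}|\G|$, $\E(\G)\ll|\G|^{5/2}$, the bound (\ref{f:13/2}), Parseval, and $|B|\ge|\G|$). At the critical scale $|\G|\asymp|B|\asymp p^{1/2}$ the target is $|B|\,|\G|^5\asymp p^{3}$, while every H\"older-type combination of your inputs falls short: for instance $\oM^3(p\E(\G))^{1/2}(p|B|)^{1/2}\asymp p^{25/8}$, and routing part of the fifth power through (\ref{f:13/2}) via $\oM^{9/5}\bigl(\sum_{\xi\neq0}|S|^{32/5}\bigr)^{1/2}(p|B|)^{1/2}\asymp p^{63/20}$, both exceeding $p^3$; the first option only yields $\F^*_p\subseteq 5\G$ for $|\G|\gg p^{6/11}$ or so. This is not just a failure to find the right exponents: one can exhibit a configuration of values of $|S(\xi)|$ and $|\widehat{1_B}(\xi)|$ consistent with all the constraints you invoke (including fourth moments of both transforms) for which the error sum is of size $p^{37/12}>p^3$, so no argument using only these quantities can reach $p^{1/2}\log^{1/3}p$. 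Your appeal to $-1\in\G$ ``trading part of $|S|^q$ against a second moment twisted by $\G-\G$'' is not a concrete step and does not supply the missing saving.

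The paper closes the gap with a different, essential ingredient that your proposal never uses: a lower bound on the sumset $S=\G+\G$. One writes $5\G\supseteq S+S+\G+\lambda\G$ and, assuming $\lambda\notin 5\G$, gets $0=\sum_{\xi}\FF{S}^2(\xi)\FF{\G}(\xi)\FF{\lambda\G}(\xi)=m^2n^2+\sum_{\xi\neq0}(\cdots)$ with $n=|\G|$, $m=|S|$; bounding the tail by $\oM^2(\G)\cdot mp$ via Parseval applied to the \emph{set} $S$ gives $n^2m\le \oM^2(\G)\,p$. The gain over your $|B|\ge|\G|$ is the factor $m=|\G+\G|$, which by Theorem 1.1 of \cite{ss} satisfies $m\gg\min\{np^{1/3}\log^{-1/3}n,\ n^{7/3}p^{-1/3}\log^{-2/3}n\}$; inserting this together with $\oM(\G)\ll n^{1/2}p^{1/6}\log^{1/6}n$ produces exactly the threshold $n\gg p^{1/2}\log^{1/3}p$ (and this is also where $-1\in\G$ is really used, to have $\G+\G=\G-\G$ and to land in $5\G$). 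So while your overall frame (contradiction, Fourier identity, Theorem 1) is in the right spirit, the decisive idea --- replacing two of the five copies of $\FF{\G}$ by the characteristic function of the large sumset $\G+\G$ and importing its growth estimate --- is missing, and without it the argument provably stalls above $p^{1/2}$.
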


Another application of our method
is  a new upper bound for so--called Heilbronn's exponential sum (see section \ref{sec:application}),
which is connected \cite{BFKS}, \cite{Chang_Fermat}, \cite{Lenstra}, \cite{OstShp}, \cite{Shp-FermVal}, \cite{Shp-Ihara}
with the distribution of so--called {\it Fermat quotients} defined as
$$
    q(n) = \frac{n^{p-1}-1}{p} \,, \quad n\neq 0 \pmod p \,.
$$
Heilbronn's exponential sum can be expressed as quantity  $\oM (\G)$ of some subgroup of $\Z/p^2\Z$,
$|\G| = p-1$.
So, that is exactly subgroup of "medium"\, (square root of the cardinality of group)
size and our approach can be applied in the case.

Let us say a few words about the method of the proof.
In papers
\cite{s_ineq}---\cite{s_mixed}, \cite{sv_heilbronn}
we obtain a new upper bound for so--called the additive energy
(see the definition in book \cite{tv} or in section \ref{sec:definitions})
of a multiplicative subgroup and after that derive form it an upper bound for the exponential sum over
the
subgroup.
In the proof we calculated the spectrum of some operators.
Here we use a more direct approach counting the quantity $\oM (\G)$ via
operators having another special weights (more precisely, see section \ref{sec:proof}).
The results say nothing new about the additive energy of multiplicative subgroups.


\section{Definitions}
\label{sec:definitions}

Let $\Gr$ be an abelian group.
If $\Gr$ is finite then denote by $N$ the cardinality of $\Gr$.
It is well--known~\cite{Rudin_book} that the dual group $\FF{\Gr}$ is isomorphic to $\Gr$ in the case.
Let $f$ be a function from $\Gr$ to $\mathbb{C}.$  We denote the Fourier transform of $f$ by~$\FF{f},$
\begin{equation}\label{F:Fourier}
  \FF{f}(\xi) =  \sum_{x \in \Gr} f(x) e( -\xi \cdot x) \,,
\end{equation}
where $e(x) = e^{2\pi i x}$.
We rely on the following basic identities
\begin{equation}\label{F_Par}
    \sum_{x\in \Gr} |f(x)|^2
        =
            \frac{1}{N} \sum_{\xi \in \FF{\Gr}} \big|\widehat{f} (\xi)\big|^2 \,,
\end{equation}
and
\begin{equation}\label{f:inverse}
    f(x) = \frac{1}{N} \sum_{\xi \in \FF{\Gr}} \FF{f}(\xi) e(\xi \cdot x) \,.
\end{equation}
If
$$
    (f*g) (x) := \sum_{y\in \Gr} f(y) g(x-y) \quad \mbox{ and } \quad (f\circ g) (x) := \sum_{y\in \Gr} f(y) g(y+x)
$$
 then
\begin{equation}\label{f:F_svertka}
    \FF{f*g} = \FF{f} \FF{g} \quad \mbox{ and } \quad \FF{f \circ g} = \FF{f}^c \FF{g} = \ov{\FF{\ov{f}}} \FF{g} \,,
\end{equation}
where for a function $f:\Gr \to \mathbb{C}$ we put $f^c (x):= f(-x)$.
 Clearly,  $(f*g) (x) = (g*f) (x)$ and $(f\c g)(x) = (g \c f) (-x)$, $x\in \Gr$.
 The $k$--fold convolution, $k\in \N$
 is defined  by $*_k$,
 so $*_k := *(*_{k-1})$.

We use in the paper  the same letter to denote a set
$S\subseteq \Gr$ and its characteristic function $S:\Gr\rightarrow \{0,1\}.$
Write $\E(A,B)$ for the {\it additive energy} of two sets $A,B \subseteq \Gr$
(see e.g. \cite{tv}), that is
$$
    \E(A,B) = |\{ a_1+b_1 = a_2+b_2 ~:~ a_1,a_2 \in A,\, b_1,b_2 \in B \}| \,.
$$
If $A=B$ we simply write $\E(A)$ instead of $\E(A,A).$
Clearly,
\begin{equation}\label{f:energy_convolution}
    \E(A,B) = \sum_x (A*B) (x)^2 = \sum_x (A \circ B) (x)^2 = \sum_x (A \circ A) (x) (B \circ B) (x)
    \,.
\end{equation}

Put for any $A\subseteq \Gr$
$$
   \T_k (A) := | \{ a_1 + \dots + a_k = a'_1 + \dots + a'_k  ~:~ a_1, \dots, a_k, a'_1,\dots,a'_k \in A \} | \,.
$$

Let
\begin{equation}\label{f:E_k_preliminalies}
    \E_k(A)=\sum_{x\in \Gr} (A\c A)(x)^k \,,
\end{equation}
and
\begin{equation}\label{f:E_k_preliminalies_B}
\E_k(A,B)=\sum_{x\in \Gr} (A\c A)(x) (B\c B)(x)^{k-1}
    =\E(\Delta_k (A),B^{k}) \,,
\end{equation}
be the higher energies of $A$ and $B$.
Here
$$
    \Delta (A) = \Delta_k (A) := \{ (a,a, \dots, a)\in A^k \}\,.
$$
Similarly, we write $\E_k(f,g)$ for any complex functions $f$ and $g$.
Quantities $\E_k (A,B)$ can be
expressed
in terms of generalized convolutions (see \cite{ss_E_k}).


\begin{definition}
   Let $k\ge 2$ be a positive number, and $f_0,\dots,f_{k-1} : \Gr \to \C$ be functions.
Write $F$ for the vector $(f_0,\dots,f_{k-1})$ and $x$ for vector $(x_1,\dots,x_{k-1})$.
Denote by
${\mathcal C}_k (f_0,\dots,f_{k-1}) (x_1,\dots, x_{k-1})$
the function
$$
    \Cf_k (F) (x) =  \Cf_k (f_0,\dots,f_{k-1}) (x_1,\dots, x_{k-1}) = \sum_z f_0 (z) f_1 (z+x_1) \dots f_{k-1} (z+x_{k-1}) \,.
$$
Thus, $\Cf_2 (f_1,f_2) (x) = (f_1 \circ f_2) (x)$.
If $f_1=\dots=f_k=f$ then write
$\Cf_k (f) (x_1,\dots, x_{k-1})$ for $\Cf_k (f_1,\dots,f_{k}) (x_1,\dots, x_{k-1})$.
\end{definition}

\bigskip

Let $g : \Gr \to \C$ be a function, and $A\subseteq \Gr$ be a finite set.
By $\oT^{g}_A$ denote the matrix with indices in the set $A$
\begin{equation}\label{def:operator1}
    \oT^{g}_A (x,y) = g(x-y) A(x) A(y) \,.
\end{equation}
It is easy to see that $\oT^{g}_A$ is hermitian iff $\ov{g(-x)} = g(x)$.
The corresponding action of $\oT_A^g$ is
$$
    \langle \oT^{g}_A a, b \rangle = \sum_z g(z) (\ov{b} \c a) (z) \,.
$$
for any functions $a,b : A \to \C$.
In the case $\ov{g(-x)} = g(x)$ by $\Spec (\oT^{g}_A)$ we denote the spectrum of the operator $\oT^{g}_A$
$$
    \Spec (\oT^{g}_A) = \{ \mu_1 \ge \mu_2 \ge \dots \ge \mu_{|A|} \} \,.
$$
Write  $\{ f \}_{\a}$, $\a\in [|A|]$ for
the corresponding eigenfunctions.
General theory of such operators was developed in \cite{s,s_mixed}.

We conclude with few comments regarding the notation used in this paper.
For a positive integer $n,$ we set $[n]=\{1,\ldots,n\}.$
All logarithms are of base $2.$ Signs $\ll$ and $\gg$ are the usual Vinogradov symbols.
By $\d_0 (x)$ denote the delta--function, that is $\d_0 (0)=1$ and $\d_0 (x) = 0$ otherwise.

\section{Preliminaries}
\label{sec:preliminaries}

In the section $\Gr = \F_p$, where $p$ is a prime number
and $\Gamma\subseteq \F^*_p$ is a multiplicative subgroup.
A set $Q \subseteq \F^*_p$ is called {\it $\Gamma$--invariant} if $Q\G=Q$.
In the situation the following lemma which is a consequence of Stepanov's approach \cite{Stepanov}
can be formulated (see, e.g. \cite{K_Tula} or \cite{sv}).

\begin{lemma}
{
    Let $p$ be a prime number,
    $\Gamma\subseteq \F^*_p$ be a multiplicative subgroup, and
    $Q,Q_1\subseteq \F^*_p$ be any $\Gamma$--invariant sets such that
    $|Q||Q_1| \ll |\Gamma|^{4}$, $|Q| |Q_1| |\G|^2 \ll p^3$.
    Then
        \begin{equation}\label{f:improved_Konyagin_old3}
            \sum_{x\in \G} (Q \circ Q_1) (x) \ll |\Gamma|^{1/3} |Q|^{2/3} |Q_1|^{2/3} \,.
        \end{equation}
}
\label{l:improved_Konyagin_old}
\end{lemma}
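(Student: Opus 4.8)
The plan is to run Stepanov's polynomial method, following \cite{K_Tula}, \cite{sv}. Put $t := |\G|$, so that $\G = \{x \in \F^*_p : x^t = 1\}$, and set $N := \sum_{x \in \G} (Q \circ Q_1)(x)$. Unfolding the definition of $\circ$ one sees that $N = |\{(q, q_1) \in Q \times Q_1 : q_1 - q \in \G\}|$; thus $N$ counts the pairs $(q, q_1) \in Q \times Q_1$ with $(q_1 - q)^t = 1$.

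First I would use $\G$--invariance to reduce the number of free variables. Writing $Q = \bigsqcup_i \lambda_i \G$ and $Q_1 = \bigsqcup_j \rho_j \G$ as unions of $\G$--cosets and substituting $\nu = s \mu$, with $s \in \G$, inside each pair of cosets, the equation $(\rho_j \nu - \lambda_i \mu)^t = 1$ becomes $(\rho_j s - \lambda_i)^t \mu^t = (\rho_j s - \lambda_i)^t = 1$, which no longer involves $\mu$. Hence
\begin{equation*}
    N \;=\; t \sum_{i,j} |\rho_j \G \cap (\lambda_i + \G)| \;=\; t \cdot |\{ (u, \lambda_i, \rho_j) \;:\; u \in \G, \ \rho_j u - \lambda_i \in \G \}| \,,
\end{equation*}
so it suffices to bound the number $N'$ of triples $(u, \lambda, \rho)$, with $u \in \G$, $\lambda$ a coset representative of $Q$ and $\rho$ one of $Q_1$, satisfying $\rho u - \lambda \in \G$; the target becomes $N = t N' \ll |\G|^{1/3} |Q|^{2/3} |Q_1|^{2/3}$.

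Now comes the Stepanov step. One splits the count according to whether a given $\G$--coset of $Q$ (or of $Q_1$) is \emph{popular}, i.e.\ accounts for an abnormally large share of $N'$, and shows that there cannot be too many popular cosets. For this one builds a single auxiliary polynomial with undetermined coefficients --- a combination of monomials $X^a$ with $0 \le a < D_1$ times powers $(X^t)^b$ with $0 \le b < D_2$, and, after clearing denominators, powers of the relevant linear forms reduced through the relations $u^t = 1$ and $(\rho u - \lambda)^t = 1$ --- chosen so as to vanish to multiplicity $\ge m$ at all the points supplied by the popular part of the count. A dimension count, there being more available monomials than linear vanishing conditions, forces the polynomial to be non-zero; since it has degree $\ll D_1 + t D_2$, it then follows that $m$ times the number of those points is $\ll D_1 + t D_2$. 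Optimising $m$, $D_1$, $D_2$ and the popularity threshold bounds the popular contribution, the non-popular part is estimated trivially, and the exponents come out to be $1/3$ and $2/3$. The two hypotheses enter exactly here: $|Q| |Q_1| \ll |\G|^4$ is the regime in which the non-vanishing (dimension) inequality is compatible with a polynomial of the needed degree, while $|Q| |Q_1| |\G|^2 \ll p^3$ keeps that degree safely below the level past which a large number of high-multiplicity zeros stops being a contradiction over $\F_p$.

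The main obstacle is precisely this construction together with the degree/dimension bookkeeping: the several parameters must be tuned so that, simultaneously, the polynomial is forced to be non-zero, its degree is small enough to contradict the assumed supply of multiple zeros, and the final exponents are the claimed ones. The remaining ingredients --- the coset reduction, the substitution $\nu = s \mu$, and the final optimisation of thresholds --- are routine. Since the statement with exactly these ranges is already available in \cite{K_Tula} and \cite{sv}, one may alternatively simply quote it.
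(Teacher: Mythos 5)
The paper does not prove this lemma at all: it is stated as a known consequence of Stepanov's method and simply referenced to \cite{K_Tula} and \cite{sv}, which is exactly the fallback you offer, and your coset reduction $N=t\,N'$ together with the translation of the hypotheses $|Q||Q_1|\ll|\G|^4$ and $|Q||Q_1||\G|^2\ll p^3$ into the standard conditions of those sources is correct. So your proposal is in line with the paper; just be aware that, as a self-contained argument, the Stepanov core you only sketch (in particular showing the specialized auxiliary polynomial is not identically zero, which is more delicate than the bare dimension count) would still have to be carried out as in those references.
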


Using Lemma \ref{l:improved_Konyagin_old}, one can easily deduce
upper bounds for moments of convolution of $\Gamma$ (see, e.g. \cite{ss}).

\begin{corollary}
    Let $p$ be a prime number and $\G \subseteq \F_p^*$ be a multiplicative subgroup, $|\G| \le p^{2/3}$.
    Then
    \begin{equation}\label{f:E_2_E_3}
        \E(\Gamma) \ll |\Gamma|^{5/2} \,, \quad \E_3 (\G) \ll |\G|^3 \log |\G| \,,
    \end{equation}
    and for all $l\ge 4$ the following holds
    \begin{equation}\label{f:E_l}
        \E_l (\G) = |\G|^l + O(|\G|^{\frac{2l+3}{3}}) \,.
    \end{equation}
\label{cor:E_l}
\end{corollary}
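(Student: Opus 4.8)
The plan is to derive all three estimates from Lemma~\ref{l:improved_Konyagin_old} via a single dyadic bound on the level sets of the function $d(x):=(\Gamma\circ\Gamma)(x)=|\Gamma\cap(\Gamma-x)|$. By \eqref{f:energy_convolution} and \eqref{f:E_k_preliminalies} one has $\E(\Gamma)=\sum_x d(x)^2$ and $\E_l(\Gamma)=\sum_x d(x)^l$, and the term $x=0$ contributes exactly $d(0)^l=|\Gamma|^l$, which is the main term in \eqref{f:E_l}; so the task reduces to bounding $\sum_{x\neq 0}d(x)^l$. Since $d(\gamma x)=d(x)$ for $\gamma\in\Gamma$, each dyadic level set $D_j:=\{x\neq 0:\ 2^j\le d(x)<2^{j+1}\}$ is a union of $\Gamma$-cosets, is empty once $2^j>|\Gamma|$, and satisfies the trivial bound $2^j|D_j|\le\sum_x d(x)=|\Gamma|^2$.

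The key step will be the estimate $|D_j|\ll|\Gamma|^3 2^{-3j}$. Expanding $\sum_{x\in D_j}d(x)=\sum_{x,y}\Gamma(y)\Gamma(y+x)D_j(x)$ and substituting $z=y+x$ rewrites this as $\sum_{z\in\Gamma}\bigl((-\Gamma)\circ D_j\bigr)(z)$; as $-\Gamma$ and $D_j$ are $\Gamma$-invariant, Lemma~\ref{l:improved_Konyagin_old} with $Q=-\Gamma$, $Q_1=D_j$ gives $\sum_{x\in D_j}d(x)\ll|\Gamma|\,|D_j|^{2/3}$, whence $2^j|D_j|\ll|\Gamma|\,|D_j|^{2/3}$ and $|D_j|\ll|\Gamma|^3 2^{-3j}$. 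Using $|D_j|\le|\Gamma|^2 2^{-j}$, the two size hypotheses of the lemma hold once $2^j\gg|\Gamma|^5/p^3$, and the assumption $|\Gamma|\le p^{2/3}$ is exactly what makes $|\Gamma|^5/p^3\le|\Gamma|^{1/2}$, so this estimate is available for all $2^j\ge|\Gamma|^{1/2}$. I would then split $\sum_{x\neq0}d(x)^l$ at the scale $2^j\approx|\Gamma|^{1/2}$ — the trivial bound below it, the $D_j$-estimate above it. For $l=2$ both halves are geometric and dominated by the endpoint, summing to $O(|\Gamma|^{5/2})$; for $l=3$ the lower half is $O(|\Gamma|^3)$ and the upper half is a sum of $O(\log|\Gamma|)$ terms each $O(|\Gamma|^3)$, giving $|\Gamma|^3\log|\Gamma|$.

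For $\E_l(\Gamma)$ with $l\ge4$ the upper geometric series becomes increasing, so I first need the pointwise bound $M:=\max_{x\neq0}d(x)\ll|\Gamma|^{2/3}$, which again follows from Lemma~\ref{l:improved_Konyagin_old}: writing $x=g$ and substituting $y\mapsto gz$ shows $d(g)=\bigl((g^{-1}\Gamma)\circ(g^{-1}\Gamma)\bigr)(1)$, and since $A:=g^{-1}\Gamma$ is $\Gamma$-invariant the function $A\circ A$ is constant on the coset $\Gamma$, so $|\Gamma|\cdot d(g)=\sum_{x\in\Gamma}(A\circ A)(x)\ll|\Gamma|^{1/3}|A|^{4/3}=|\Gamma|^{5/3}$ (the lemma applies since $|\Gamma|\le p^{2/3}<p^{3/4}$). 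With this in hand, in $\sum_{x\neq0}d(x)^l$ the lower half (trivial bound) contributes $O(|\Gamma|^{(l+3)/2})$, while the upper half, via $|D_j|\ll|\Gamma|^3 2^{-3j}$, is an increasing geometric series dominated by its top term at $2^j\approx M$, contributing $O\bigl(|\Gamma|^3 M^{l-3}\bigr)=O\bigl(|\Gamma|^{(2l+3)/3}\bigr)$. Since $(l+3)/2\le(2l+3)/3$ for $l\ge3$, both halves are $O(|\Gamma|^{(2l+3)/3})$, which together with the main term $|\Gamma|^l$ yields \eqref{f:E_l}.

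The main obstacle is not a single deep estimate but the interplay of the ranges: the $D_j$-bound is only valid for $2^j\ge|\Gamma|^{1/2}$, so one must verify that the crude bound $|D_j|\le|\Gamma|^2 2^{-j}$ really suffices below that scale and that the pointwise bound $M\ll|\Gamma|^{2/3}$ really terminates the increasing tail above it; these just barely close up, and only because $l\ge4$ and $|\Gamma|\le p^{2/3}$. Checking the two size conditions of Lemma~\ref{l:improved_Konyagin_old} at each application — in particular that $|\Gamma|^5/p^3\le|\Gamma|^{1/2}$, which is precisely where the hypothesis $|\Gamma|\le p^{2/3}$ enters — is the delicate bookkeeping; everything else is routine geometric summation.
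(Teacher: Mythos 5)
Your argument is correct, and it is essentially the proof the paper has in mind: the paper gives no details, merely invoking Lemma \ref{l:improved_Konyagin_old} and citing \cite{ss}, and the standard deduction there is exactly your dyadic decomposition of the level sets of $(\G\c\G)$, the bound $|D_j|\ll|\G|^32^{-3j}$ from the Stepanov-type lemma, and the pointwise bound $\max_{x\neq0}(\G\c\G)(x)\ll|\G|^{2/3}$ to close the increasing tail for $l\ge4$. Your verification of the two size hypotheses of the lemma (via $|D_j|\le|\G|^22^{-j}$ and $|\G|\le p^{2/3}$) is the right bookkeeping and is carried out correctly.
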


Certainly, the condition $|\G| \ll p^{2/3}$ in formula (\ref{f:E_l}) can be  relaxed.

The same method gives a generalization (see \cite{K_Tula}).

\begin{theorem}
    Let $\G \subseteq \F^*_p$ be a multiplicative subgroup, $|\G| < \sqrt{p}$.
    Let also $d\ge 2$ be a positive integer.
    Then arranging values of $(\G *_{d-1} \G) (\xi)$ in decreasing order
    $(\G *_{d-1} \G) (\xi_1) \ge (\G *_{d-1} \G) (\xi_2) \ge \dots $, where $\xi_j \neq 0$
    belong to distinct cosets,
    we have
    $$
        (\G *_{d-1} \G) (\xi_j) \ll_d |\G|^{d-2 + 3^{-1} (1+2^{2-d}) } j^{-\frac{1}{3}} \,.
    $$
    In particular
    \begin{equation}\label{f:T_d}
        \T_d (\G) \ll_d |\G|^{2d-2+2^{1-d}} \,.
    \end{equation}
\label{t:3_d_moment}
\end{theorem}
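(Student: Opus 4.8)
The plan is to bootstrap from the base case $d=2$, where $(\G *_1 \G)(\xi) = (\G \circ \G)(\xi)$ (since $-1 \in \G$ or, more elementarily, because $\G * \G$ and $\G \circ \G$ differ only by a reflection which preserves cosets), and the estimate $(\G \circ \G)(\xi_j) \ll |\G|^{1/3} j^{-1/3}$ is exactly Lemma~\ref{l:improved_Konyagin_old} applied with $Q = Q_1 = \G$ restricted to the top-$j$ cosets: indeed $\sum_{i \le j} (\G \circ \G)(\xi_i) \le \sum_{x \in \G} (Q \circ Q)(x)$ with $|Q| = j|\G|$, and feeding this into (\ref{f:improved_Konyagin_old3}) gives $j \cdot (\G \circ \G)(\xi_j) \le \sum_{i\le j} (\G\circ\G)(\xi_i) \ll |\G|^{1/3}(j|\G|)^{4/3} \cdot |\G|^{-2} = |\G|^{2/3} j^{4/3}$, i.e. $(\G\circ\G)(\xi_j) \ll |\G|^{2/3} j^{1/3}$. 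I need to double-check the arithmetic here against the claimed exponent $d-2+3^{-1}(1+2^{2-d})$, which at $d=2$ equals $3^{-1}\cdot 2 = 2/3$; so the clean dyadic/level-set bookkeeping should reproduce it, and the side conditions $|Q|^2 \ll |\G|^4$ and $|Q|^2|\G|^2 \ll p^3$ translate to $j \ll |\G|$ and $j \ll p^3/|\G|^4$, which for $|\G| < \sqrt p$ and the relevant range of $j$ are mild.

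For the inductive step I would write $\G *_{d-1} \G = (\G *_{d-2} \G) * \G$ and observe that by Plancherel / convolution identities
\[
    \sum_{i \le j} (\G *_{d-1} \G)(\xi_i) \le \sum_{x} (Q *_{d-2} \G \circ \G)(x) = \sum_{y} (Q \circ Q_{d-2})(y)(\G \circ \G)(y) \,,
\]
where $Q$ is again $\G$ on the top-$j$ cosets of $\G *_{d-1} \G$ and $Q_{d-2}$ absorbs the remaining $*_{d-2}$ structure; more cleanly, one estimates $\sum_{i\le j}(\G *_{d-1}\G)(\xi_i)$ by splitting off one copy of $\G$ and one copy of the $(d-2)$-fold convolution and applying Lemma~\ref{l:improved_Konyagin_old} with $Q = $ (top cosets), $Q_1 = $ the support-weighted $\G*_{d-2}\G$ truncated appropriately. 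The point is that each peeling replaces the exponent $e_{d}$ by a combination of $e_{d-1}$ (from the inner convolution) and the gain $|\G|^{1/3}\cdot(\text{mass})^{2/3}$ from the lemma; setting up the recursion $e_d = 1 + \tfrac13(2 e_{d-1} + \text{something})$ should collapse to the stated closed form $e_d = d - 2 + \tfrac13(1 + 2^{2-d})$, and one checks $e_d - e_{d-1} = 1 - \tfrac13 2^{1-d}$, which is exactly what one gain step contributes. The $j^{-1/3}$ factor is inherited verbatim from the truncation: bounding $j$ copies of the $j$-th largest value by the total.

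The main obstacle is organizing the induction so that Lemma~\ref{l:improved_Konyagin_old} is applied to genuinely $\G$-invariant sets of controlled size at each stage: the support of $\G *_{d-2}\G$ is not naturally a set but a weighted object, so I expect to need a dyadic decomposition of the weights $(\G *_{d-2}\G)(y)$ into $O(\log |\G|)$ level sets $Q^{(t)} = \{y : (\G*_{d-2}\G)(y) \sim 2^t\}$, apply the inductive bound to control $|Q^{(t)}|$ (via $\sum_{y \in Q^{(t)}}(\G*_{d-2}\G)(\xi_i) $ and the known $\T_{d-1}$-type estimate), and then apply Lemma~\ref{l:improved_Konyagin_old} on each level separately before summing. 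The logarithmic losses must be tracked but — since the final statement has an implied constant depending on $d$ and no log factor in (\ref{f:T_d}) — presumably they are reabsorbed using the fact that $\T_d(\G) = |\G|^{2d} + \text{error}$ so the extreme cosets dominate; I would verify that the $\log$'s only inflate the $O_d(1)$ constant and not the exponent. Finally, (\ref{f:T_d}) follows by summing the pointwise bound: $\T_d(\G) = \sum_\xi (\G*_{d-1}\G)(\xi)^2 \ll |\G|^{2d} + \sum_{\xi \ne 0} (\G*_{d-1}\G)(\xi)^2$, and the tail $\sum_j \big(|\G|^{e_d} j^{-1/3}\big)^2$ over $j \le |\G|$ (number of cosets) is $\ll |\G|^{2e_d} \cdot |\G|^{1/3} = |\G|^{2d - 4 + \frac23(1+2^{2-d}) + \frac13} = |\G|^{2d-2+2^{1-d}}$ after checking $-4 + \tfrac23 + \tfrac43 2^{2-d} + \tfrac13 = -2 + 2^{1-d}$, which is the claimed exponent.
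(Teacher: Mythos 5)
The paper does not actually prove Theorem~\ref{t:3_d_moment}: it is quoted from \cite{K_Tula}, so your attempt can only be measured against the standard argument, which indeed has the shape you describe (top--coset truncation, the Stepanov--type Lemma~\ref{l:improved_Konyagin_old}, dyadic level sets, induction on $d$). But your execution has genuine gaps, starting with the base case. The decay in $j$ comes from pairing the union $Q$ of the top $j$ cosets against a \emph{single} copy of the subgroup: $|\G|\, j\, (\G\c\G)(\xi_j) \le \sum_{x\in Q}(\G\c\G)(x) = \sum_{y\in\G}(Q\c\G)(y) \ll |\G|^{1/3}(j|\G|)^{2/3}|\G|^{2/3}$, i.e. $(\G\c\G)(\xi_j)\ll |\G|^{2/3}j^{-1/3}$. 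Your pairing of $Q$ against $Q$ produces $|\G|^{1/3}(j|\G|)^{4/3}$ on the right, and after dividing by the single factor $j$ you land (your own computation) at $|\G|^{2/3}j^{+1/3}$ --- increasing in $j$, which is not the theorem even at $d=2$; the unexplained factor $|\G|^{-2}$ does not repair the sign, and your first guess $|\G|^{1/3}j^{-1/3}$ has the wrong power of $|\G|$. This is fixable, but as written the base case is false.

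The more serious gap is the inductive step and the passage to (\ref{f:T_d}). Your recursion is left as ``$e_d=1+\tfrac13(2e_{d-1}+\text{something})$'' with the ``something'' unspecified, and your only concrete check is wrong: the true increment is $e_d-e_{d-1}=1-\tfrac13 2^{2-d}$, not $1-\tfrac13 2^{1-d}$. If, after peeling one copy of $\G$, you bound the dyadic level sets $L_s$ of $\G*_{d-2}\G$ only by the inductive pointwise bound (giving $|L_s|/|\G|\ll |\G|^{3e_{d-1}}2^{-3s}$) you obtain $e_d\le 2e_{d-1}+\tfrac23$, and using only a $\T_{d-1}$--type (second moment) bound gives $e_d\le \tfrac23\tau_{d-1}$; either way $e_3\ge 5/3$, not $3/2$. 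The missing idea is to bound each $|L_s|$ in \emph{two} ways --- inductively (equivalently via $\T_{d-1}$) for large values, and via the first moment $\sum_y(\G*_{d-2}\G)(y)=|\G|^{d-1}$ for small values --- and balance at the crossover; this yields $e_d=\tfrac12(e_{d-1}+d-2)+\tfrac23$ (equivalently $3e_d=\tau_{d-1}+d-1$), whose solution is the stated closed form, and the same first--moment bound is what makes the hypotheses of Lemma~\ref{l:improved_Konyagin_old} checkable, since the lemma-based argument is only needed for $j\ll |\G|^{1-2^{1-d}}$ (larger $j$ follow trivially from $j\,v_j\le |\G|^{d-1}$). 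Finally, your deduction of (\ref{f:T_d}) fails: the number of nonzero cosets is $(p-1)/|\G|\ge |\G|$ under $|\G|<\sqrt p$, not $\le |\G|$; you dropped the factor $|\G|$ converting a sum over cosets into a sum over $\xi$; and your exponent identity is false --- even with your cap one gets $2d-2+\tfrac43\,2^{1-d}$, not $2d-2+2^{1-d}$. The correct deduction truncates the coset sum at $J_0\sim |\G|^{1-2^{1-d}}$ and estimates the tail by $v_{J_0}\sum_j v_j\le v_{J_0}|\G|^{d-1}$, which does give $\T_d(\G)\ll_d |\G|^{2d-2+2^{1-d}}$.
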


We need in a
lemma about Fourier coefficients of an arbitrary $\G$--invariant set (see e.g. \cite{K_Tula} and \cite{ss}).

\begin{lemma}
        Let $\G \subseteq \F^*_p$ be a multiplicative subgroup,
        and $Q$ be an $\G$--invariant subset of $\F^*_p$.
        Then for any $\xi \neq 0$ the following holds
\begin{equation}\label{f:G-inv_bound_F}
    | \FF{Q} (\xi) | \le \min \left\{ \left(\frac{|Q|p}{|\G|}\right)^{1/2} \,, \frac{|Q|^{3/4} p^{1/4} \E^{1/4} (\G)}{|\G|} \,,
                            p^{1/8} \E^{1/8} (\G) \E^{1/8} (Q) \left(\frac{|Q|}{|\G|}\right)^{1/2} \right\} \,.
\end{equation}
        Moreover for any positive integers $l$ and $m$ one has
\begin{equation}\label{f:M(G)_T_k}
        \oM (\G) \le p^{1/2lm} \T^{1/2lm}_l (\G) \T^{1/2lm}_m (\G) \cdot |\G|^{1-1/l-1/m} \,.
\end{equation}
\label{l:G-inv_bound_F}
\end{lemma}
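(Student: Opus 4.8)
\medskip
\noindent\textbf{Proof sketch.}
The backbone of all four inequalities is the remark that $Q\G=Q$ forces $\FF{Q}(\xi g)=\FF{Q}(\xi)$ for every $g\in\G$, so $|\FF{Q}|$ is constant on each multiplicative coset $\xi\G$; likewise $|\FF{\G}|$ (the case $Q=\G$) and every power $|\FF{\G}(\xi)|^{2k}$, the last of these being a nonnegative real function, namely the Fourier transform of $(\G*_{k-1}\G)\c(\G*_{k-1}\G)$ (here $\G*_{k-1}\G$ is the $k$--fold convolution of $\G$), whose value at $0$ is $\T_k(\G)$. Feeding this into Parseval gives the two ingredients I want: $\sum_\xi|\FF{Q}(\xi)|^2=p|Q|$ and $\sum_\xi|\FF{\G}(\xi)|^{2k}=p\,\T_k(\G)$ (so $k=2$ gives $p\,\E(\G)$). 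The first bound in (\ref{f:G-inv_bound_F}) now drops out: restricting $\sum_\xi|\FF{Q}(\xi)|^2=p|Q|$ to a single coset yields $|\G|\,|\FF{Q}(\xi)|^2\le p|Q|$.

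For the second bound I would use that a $\G$--invariant set is a disjoint union of cosets, $Q=\bigsqcup_{j\le k}a_j\G$ with $k=|Q|/|\G|$, so $\FF{Q}(\xi)=\sum_j\FF{\G}(a_j\xi)$; H\"older gives $|\FF{Q}(\xi)|^4\le k^3\sum_j|\FF{\G}(a_j\xi)|^4$, and since the $a_j\xi$ sit in distinct cosets, on each of which $|\FF{\G}|^4$ is constant, $\sum_j|\FF{\G}(a_j\xi)|^4\le|\G|^{-1}\sum_{\eta\ne0}|\FF{\G}(\eta)|^4\le p\,\E(\G)/|\G|$. For the third bound I would run the argument one level deeper: from $|\G|\,|\FF{Q}(\xi)|^2=\sum_{g\in\G}|\FF{Q}(\xi g)|^2=|Q|\,|\G|+\sum_{t\ne0}(Q\c Q)(t)\FF{\G}(\xi t)$, use that $Q\c Q$ is again $\G$--invariant (hence constant on cosets), regroup the off-diagonal sum over the $(p-1)/|\G|$ cosets, and apply a three-term H\"older inequality to the three factors (the value of $Q\c Q$ on the coset; $|\FF{\G}(\xi r)|$; and $1$) with exponents chosen so that the three pieces become powers of $\E(Q)$, of $\sum_\eta|\FF{\G}(\eta)|^4=p\,\E(\G)$, and of the coset count; after checking that the diagonal term $|Q|\,|\G|$ is dominated this yields a bound of the stated shape.

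Inequality (\ref{f:M(G)_T_k}) I would obtain from the same mechanism applied simultaneously to the two $\G$--invariant functions $\G*_{l-1}\G$ and $\G*_{m-1}\G$, whose Fourier transforms are $(\FF{\G})^{l}$ and $(\FF{\G})^{m}$: writing $\oM(\G)=|\FF{\G}(\xi_0)|$ and using that this maximum is attained on the whole coset $\xi_0\G$, together with $\sum_\xi|\FF{\G}(\xi)|^{2l}=p\,\T_l(\G)$, $\sum_\xi|\FF{\G}(\xi)|^{2m}=p\,\T_m(\G)$ and the trivial $\oM(\G)\le|\G|$, one combines the $l$-th and $m$-th moment bounds via H\"older so that the exponent $2lm$ is split between $p$, $\T_l(\G)$, $\T_m(\G)$ and the appropriate powers of $|\G|$.

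The part I expect to be genuinely fiddly is the bookkeeping rather than any idea: for each of the four estimates one must pick the H\"older exponents (and decide which Parseval moment to insert) so that the powers of $p$, of $|\G|$, of the energies $\E(\cdot)$, and of the number-of-cosets factor $(p-1)/|\G|$ all balance to the claimed values, and one must repeatedly verify that the diagonal ($t=0$) contributions are absorbed by the off-diagonal ones. For (\ref{f:M(G)_T_k}), where $\T_l$ and $\T_m$ have to appear together with the exact powers of $|\G|$, this balancing is essentially the entire content.
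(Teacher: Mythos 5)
Your handling of the first two bounds in (\ref{f:G-inv_bound_F}) is correct and is the standard route (the paper itself offers no proof, deferring to \cite{K_Tula} and \cite{ss}, which argue exactly via coset--invariance of $|\FF{Q}|$ plus Parseval moments). The third bound, however, does not follow from the three--term H\"older you describe. If the three blocks are to be powers of $\E(Q)$, of $\sum_\eta|\FF{\G}(\eta)|^4=p\,\E(\G)$ and of the coset count, the exponents are forced to be $(2,4,4)$, and then $\sum_r c_r^2\le\E(Q)/|\G|$, $\sum_r|\FF{\G}(\xi r)|^4\le p\,\E(\G)/|\G|$, $\sum_r 1\le p/|\G|$ give
$$
|\FF{Q}(\xi)|^2\ \le\ |Q|\ +\ p^{1/2}\,\E^{1/4}(\G)\,\E^{1/2}(Q)\,|\G|^{-1}\,,
$$
whereas the claimed estimate is $|\FF{Q}(\xi)|^2\le p^{1/4}\E^{1/4}(\G)\E^{1/4}(Q)\,|Q|/|\G|$. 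Your main term exceeds the claimed one by the factor $p^{1/4}\E^{1/4}(Q)/|Q|\ \ge 1$ (since $\E(Q)\ge|Q|^4/p$), and this factor is $\gg1$ in all interesting situations, so the stated inequality cannot be recovered this way. The correct split keeps an $L^1$ factor of $Q\c Q$: from the identity $|\G|\,|\FF{Q}(\xi)|^2=\sum_{t}(Q\c Q)(t)\,\FF{\G}(\xi t)$ (no need to separate $t=0$), write $(Q\c Q)=(Q\c Q)^{1/2}(Q\c Q)^{1/2}$ and apply Cauchy--Schwarz twice: the first application produces $\big(\sum_t(Q\c Q)(t)\big)^{1/2}=|Q|$ and $\big(\sum_t(Q\c Q)(t)|\FF{\G}(\xi t)|^2\big)^{1/2}$, and the second bounds the latter by $\E^{1/4}(Q)\,(p\,\E(\G))^{1/4}$, using $\xi\neq0$ only to see $\sum_t|\FF{\G}(\xi t)|^4=p\,\E(\G)$; this yields exactly the third bound.

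The moment bound (\ref{f:M(G)_T_k}) is a more serious gap. The ingredients you list --- the maximum attained on the whole coset, the identities $\sum_\xi|\FF{\G}(\xi)|^{2l}=p\,\T_l(\G)$, $\sum_\xi|\FF{\G}(\xi)|^{2m}=p\,\T_m(\G)$, and $\oM(\G)\le|\G|$ --- amount to the numerical facts $|\G|\oM^{2l}\le p\T_l(\G)$, $|\G|\oM^{2m}\le p\T_m(\G)$, $\oM\le|\G|$, and no exponent bookkeeping can extract (\ref{f:M(G)_T_k}) from these alone: the assignment $\oM=|\G|$, $\T_l=|\G|^{2l+1}/p$, $\T_m=|\G|^{2m+1}/p$ with $|\G|=p^{2/5}$ and $l,m\ge2$ satisfies all of them (together with the trivial constraints $|\G|^{k}\le\T_k\le|\G|^{2k-1}$ and $\oM\le\sqrt p$), yet it violates (\ref{f:M(G)_T_k}), which for these values would force $p\le|\G|^2$. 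So the coupling of $\T_l$ and $\T_m$ through the small exponent $1/2lm$ on $p$ is not a balancing act; one must use the invariance inside the expansion, e.g. pass to $f_l=\G*_{l-1}\G$ and expand $|\G|\oM^{2l}\le\sum_{g\in\G}|\FF{f_l}(\xi_0 g)|^2=\sum_{s}(f_l\c f_l)(s)\,\FF{\G}(\xi_0 s)$, and only then bring the $2m$--th moment to bear on the off--diagonal part (this is the Konyagin--Shparlinski argument behind the citation \cite{K_Tula}). As written --- and as you yourself flag --- this part of your plan is not yet a proof.
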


In particular, the first formula of (\ref{f:G-inv_bound_F}) implies that $\oM (\G) \le \sqrt{p}$.
In the case of small ($|\G| > p^{\eps}$) multiplicative subgroups
nontrivial upper bounds for $\oM (\G)$ were obtained by additive combinatorics methods (see \cite{Bou_prod1,BGK,BK}).

\bigskip

We conclude the section recalling some results on the additive energy of multiplicative subgroups.
It was proved in \cite{H-K} (see also \cite{K_Tula})  that $\E(\G) = O(|\G|^{5/2})$,
provided by $|\G| \le p^{2/3}$.
At the moment the best upper bound for the additive energy of multiplicative subgroups
was obtained in \cite{s_ineq}.

\begin{theorem}
    Let $p$ be a prime number and $\G \subseteq \F^*_p$ be a multiplicative subgroup,
    $|\G| \le p^{2/3}$.
    Then
    \begin{equation}\label{f:subgroup_energy}
        \E (\G)
            \ll
                \min\{ |\G|^{\frac{32}{13}} \log^{\frac{41}{65}} |\G|,
                        |\G|^3 p^{-\frac{1}{3}} \log^{} |\G| + p^{\frac{1}{26}} |\G|^{\frac{31}{13}} \log^{\frac{8}{13}} |\G| \} \,.
    \end{equation}
\label{t:subgroup_energy}
\end{theorem}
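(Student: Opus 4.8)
The plan is to follow the popular--difference--set method combined with the spectral theory of the operators $\oT^{g}_{A}$ of Section~\ref{sec:definitions}. Since $\E(\G)=\sum_x(\G\circ\G)(x)^2\ge(\G\circ\G)(0)^2=|\G|^2$ and we may assume $\E(\G)\gg|\G|^2$, a dyadic decomposition of $\sum_{x\neq0}(\G\circ\G)(x)^2$ produces a threshold $\Delta$ and a set $P=\{x\neq0:\Delta\le(\G\circ\G)(x)<2\Delta\}$ with $\E(\G)\ll\Delta^2|P|\log|\G|$ and $\Delta|P|\le\sum_{x\neq0}(\G\circ\G)(x)<|\G|^2$. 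The essential point is that $P$ is $\G$--invariant: rescaling $\G$ by $\gamma\in\G$ only rescales the argument of $(\G\circ\G)$, which is moreover an even function, so $\gamma P=P=-P$. Hence the whole $\G$--invariant apparatus of Section~\ref{sec:preliminaries} applies to $P$, and since the two displayed inequalities hold for every dyadic level it suffices to bound $\Delta^2|P|$ under them.

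First I would extract the cheap moment bounds: $\E_3(\G)\ll|\G|^3\log|\G|$ from Corollary~\ref{cor:E_l} gives $\Delta^3|P|\le\E_3(\G)\ll|\G|^3\log|\G|$, while Lemma~\ref{l:improved_Konyagin_old} with $Q=P$, $Q_1=\G$ only recovers the same content, because $\sum_{y\in\G}(P\circ\G)(y)=\sum_x P(x)(\G\circ\G)(x)\asymp\Delta|P|$. Balancing these against $\Delta|P|\le|\G|^2$ reproves nothing better than $\E(\G)\ll|\G|^{5/2}(\log|\G|)^{O(1)}$, so the gain must come from a genuinely higher--degree, spectral input. Here I would pass to the Hermitian operator $\oT^{P}_{\G}$, whose spectrum and low even traces are the right objects: $\tr((\oT^{P}_{\G})^{2})=\sum_{t\in P}(\G\circ\G)(t)\asymp\Delta|P|$, the higher traces are controlled from above by the estimates $\T_3(\G)\ll|\G|^{4+1/4}$, $\T_4(\G)\ll|\G|^{6+1/8}$ of Theorem~\ref{t:3_d_moment} and $\E_4(\G)=|\G|^4+O(|\G|^{11/3})$ of Corollary~\ref{cor:E_l}, the top eigenvalue satisfies $\mu_1\gg\Delta|P|/|\G|$ by testing against $\G$, and from above $\mu_1\ll|P||\G|/p+(|P|p/|\G|)^{1/2}$ by the Fourier bound $|\widehat{P}(\xi)|\le(|P|p/|\G|)^{1/2}$ for $\xi\neq0$ of Lemma~\ref{l:G-inv_bound_F} together with $\widehat{P}(0)=|P|$. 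A Cauchy--Schwarz / moment interpolation between these trace data and the spectral gap then bounds $\Delta^{2}|P|\asymp\E(\G)/\log|\G|$ from above; feeding the result into the two constraints of the first paragraph and eliminating $\Delta$ and $|P|$ collapses, after optimization, to $\E(\G)^{13}\ll|\G|^{32}(\log|\G|)^{O(1)}$, i.e. the first bound of (\ref{f:subgroup_energy}).

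The $p$--dependent alternative in (\ref{f:subgroup_energy}) emerges from the very same scheme in the complementary range, where $|P|$ is so large that either the size hypotheses of Lemma~\ref{l:improved_Konyagin_old} degrade or the term $|P||\G|/p$ dominates $\mu_1$: the latter then forces $\Delta\ll|\G|^2/p$, which after the same elimination gives the $|\G|^{3}p^{-1/3}\log|\G|$ term, and the residual contribution $(|P|p/|\G|)^{1/2}$ to $\mu_1$ produces the $p^{1/26}|\G|^{31/13}\log^{8/13}|\G|$ term. I expect the spectral step to be the main obstacle: one must design a single operator whose handful of computable traces can be simultaneously lower--bounded in terms of $\Delta^2|P|\asymp\E(\G)$ and upper--bounded by the already available $\T_d(\G)$ and $\E_l(\G)$ estimates, and then run the four--parameter optimization in $\Delta,|P|,|\G|,p$ with enough care that the exponents $32/13$, $41/65$ (and $31/13$, $1/26$, $8/13$) and all logarithmic factors come out exactly, including the precise size of $|P|$ at which one switches between the two regimes.
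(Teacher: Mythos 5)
First, a point of orientation: this theorem is not proved in the present paper at all --- it is quoted from \cite{s_ineq} as a known input (Section \ref{sec:preliminaries}), so there is no in-paper argument to match your sketch against; it has to stand on its own as a proof of the $32/13$ bound. Your framework is the natural one (and is indeed the general spirit of \cite{s_ineq}): dyadic pigeonholing to a popular-difference set $P=\{x\neq 0:\ \Delta\le (\G\c\G)(x)<2\Delta\}$, which is $\G$-invariant and symmetric, the constraints $\E(\G)\ll \Delta^2|P|\log|\G|$, $\Delta|P|\le|\G|^2$, $\Delta^3|P|\le\E_3(\G)\ll|\G|^3\log|\G|$, and then a spectral input through an operator of type $\oT^{P}_{\G}$. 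Up to that point everything you write is correct, and you correctly observe that Lemma \ref{l:improved_Konyagin_old} applied to the pair $(P,\G)$ only reproduces the $\E_3$ information, so that these constraints alone cannot beat $\E(\G)\ll|\G|^{5/2}$.

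The genuine gap is exactly the step you defer: no inequality is ever formulated, let alone proved, that converts the spectral data into a bound on $\Delta^2|P|$ stronger than $|\G|^{5/2}$, and the ingredients you list are demonstrably insufficient. Quantitatively: $\mu_1\ge \Delta|P|/|\G|$ together with $\mu_1^2\le \tr((\oT^P_\G)^2)\asymp\Delta|P|$ gives only the trivial $\Delta|P|\ll|\G|^2$; and combined with your Fourier bound $\mu_1\ll |P||\G|/p+(|P|p/|\G|)^{1/2}$ it gives only $\Delta^2|P|\ll p|\G|$ (plus a branch $\Delta\ll|\G|^2/p$), which is weaker than $|\G|^{32/13}$ throughout the whole range $|\G|\le p^{2/3}$, since $|\G|^{19/13}\le p^{38/39}<p$. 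The claim that the higher traces of $\oT^P_\G$ are ``controlled'' by $\T_3(\G)$, $\T_4(\G)$, $\E_4(\G)$ is unsubstantiated: for instance $\tr((\oT^P_\G)^4)=\sum_{s,t,u}P(s)P(t)P(u)P(s+t+u)\,\Cf_4(\G)(\cdot)$-type sums genuinely involve the additive structure of $P$, not just moments of $\G$, and no reduction is offered. In the actual proof in \cite{s_ineq} the exponent $13$ is produced by a precise inequality relating $\E(\G)$, $\E_3(\G)$ and convolutions/energies of the $\G$-invariant set $P$ itself (in particular Lemma \ref{l:improved_Konyagin_old} is exploited with \emph{both} arguments taken to be invariant sets such as $P$, giving $\sum_{x\in\G}(P\c P)(x)\ll|\G|^{1/3}|P|^{4/3}$), fed into an eigenvalue identity in the style of Proposition \ref{p:new_weights}; nothing of this kind appears in your sketch, and likewise the provenance of the exponents $1/26$, $31/13$, $8/13$ in the second bound is asserted rather than derived. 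As it stands, the proposal is a plausible plan whose decisive inequality --- the only place where $32/13$ can come from --- is missing.
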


\section{The proof of the main result}
\label{sec:proof}

We begin with a simple lemma.

\begin{lemma}
    Let $A\subseteq \Gr$ be a set and $g$ be a function, $\ov{g(-x)} = g(x)$.
    Let also $c$ be a complex constant, $g_c (x) := g(x) + c\d_0 (x)$.
    Then the operators $\oT^g_A$ and $\oT^{g_c}_A$ have the same eigenfunctions and
    $\Spec (\oT^{g_c}_A) = \Spec (\oT^{g}_A) + c$.
\label{l:operators_balanced}
\end{lemma}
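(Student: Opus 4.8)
The plan is to unpack the definition of the operator and observe that adding $c\delta_0(x)$ to the weight $g$ precisely adds $c$ times the identity matrix to the matrix $\oT^g_A$. Indeed, from the defining formula $\oT^{g}_A(x,y) = g(x-y)A(x)A(y)$ we get
$$
    \oT^{g_c}_A(x,y) = g_c(x-y) A(x) A(y) = \bigl(g(x-y) + c\d_0(x-y)\bigr) A(x) A(y) = \oT^{g}_A(x,y) + c\, \d_0(x-y) A(x) A(y)\,,
$$
and since $\d_0(x-y) A(x) A(y) = A(x)\d_0(x-y)$ equals $1$ exactly when $x=y\in A$ and $0$ otherwise, the second term is $c$ times the identity matrix $\mathrm{Id}_A$ on the index set $A$. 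Thus as matrices $\oT^{g_c}_A = \oT^{g}_A + c\,\mathrm{Id}_A$.

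From here everything is immediate linear algebra: if $\oT^{g}_A f = \mu f$ for some eigenfunction $f : A \to \C$, then $\oT^{g_c}_A f = \oT^{g}_A f + c f = (\mu + c) f$, so $f$ is an eigenfunction of $\oT^{g_c}_A$ with eigenvalue $\mu + c$; the converse is the same computation with $c$ replaced by $-c$. Hence the two operators share the same eigenfunctions and $\Spec(\oT^{g_c}_A) = \Spec(\oT^{g}_A) + c$. One should also record that the hypothesis $\ov{g(-x)} = g(x)$ together with $\ov{\d_0(-x)} = \d_0(x)$ (which holds since $\d_0$ is real and even) and $c$ being a complex constant... actually one must be slightly careful: for $\oT^{g_c}_A$ to be hermitian one wants $\ov{g_c(-x)} = g_c(x)$, i.e. $\ov{c} = c$, so strictly $c$ should be real for the statement about $\Spec$ (ordered real eigenvalues) to make literal sense. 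I would note this, or simply observe that the eigenfunction/eigenvalue shift identity holds for any complex $c$ regardless, and the spectrum statement is then applied in the paper only with real $c$.

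There is essentially no obstacle here — the only thing to be careful about is the bookkeeping that $\d_0(x-y)A(x)A(y)$ really is the identity on $A$ (as opposed to the identity on all of $\Gr$ restricted, which is the same thing), and the hermiticity caveat on $c$ mentioned above. I would write the proof in three or four lines: state the matrix identity $\oT^{g_c}_A = \oT^{g}_A + c\,\mathrm{Id}_A$, deduce the eigenfunction correspondence and the spectral shift, and remark on the sign ordering of the spectrum when $c$ is real. \hfill$\Box$
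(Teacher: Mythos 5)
Your proof is correct and is exactly the (unwritten) argument the paper intends: the lemma is stated without proof, and the identity $\oT^{g_c}_A = \oT^{g}_A + c\,\mathrm{Id}_A$ immediately gives the common eigenfunctions and the shifted spectrum. Your caveat about $c$ needing to be real for $\Spec$ (as an ordered set of real eigenvalues) to make literal sense is a fair observation and is harmless, since in the paper's only application ($\_phi=\psi+c$ with $\_phi,\psi$ real) the constant $c$ is indeed real.
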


In the proof we need a result on operators with special weights.
The method develops the arguments from \cite{ss_E_k,s_ineq,s_mixed}.

\begin{proposition}
    Let $A\subseteq \Gr$ be a set, and $\_phi \ge 0$, $\psi$ be two real functions of $\Gr$.
    Then
\begin{equation}\label{f:new_weights}
    \left( \sum_{x} \_phi (x) |\FF{f_1} (x)|^2 \right)^2
        \left( \frac{1}{|A|} \sum_{x} \psi (x) |\FF{A} (x)|^2 \right)^4
            \le
                \E_3 (A) \E_3 (\_phi, \psi, \psi) N^2 \,,
\end{equation}
    where $f_1$ is the main eigenfunction of the operator $\oT^{\FF{\psi}}_A$.
    In particular, if for some constant $c$ one has $\_phi (x) = \psi(x)+c$ then
\begin{equation}\label{f:new_weights'}
    \left( \frac{1}{|A|} \sum_{x} \_phi (x) |\FF{A} (x)|^2 \right)^2
        \left( \frac{1}{|A|} \sum_{x} \psi (x) |\FF{A} (x)|^2 \right)^4
            \le
                \E_3 (A) \E_3 (\_phi, \psi, \psi) N^2 \,.
\end{equation}
\label{p:new_weights}
\end{proposition}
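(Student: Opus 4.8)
The plan is to rewrite both bracketed factors of (\ref{f:new_weights}) as spectral data of $\oT^{\FF{\psi}}_A$, reduce the inequality to a single estimate on $\mu_1 := \max \Spec(\oT^{\FF{\psi}}_A)$, and then prove that estimate by unfolding the eigenvalue equation and applying Cauchy--Schwarz. From the displayed action formula for $\oT^g_A$, together with (\ref{F_Par}), (\ref{f:F_svertka}) and $\ov{\FF{\psi}(-x)}=\FF{\psi}(x)$ (true since $\psi$ is real, so $\oT^{\FF{\psi}}_A$ is hermitian), one gets for every function $a$ supported on $A$ the identity $\langle \oT^{\FF{\psi}}_A a,a\rangle = \sum_\xi \psi(\xi)|\FF{a}(\xi)|^2$, and likewise with $\_phi$ in place of $\psi$. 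Applying this to $\widetilde{A}:=A/|A|^{1/2}$ (which has $\|\widetilde{A}\|_2=1$) and to $f_1$ gives $\frac{1}{|A|}\sum_\xi\psi(\xi)|\FF{A}(\xi)|^2=\langle\oT^{\FF{\psi}}_A\widetilde{A},\widetilde{A}\rangle\le\mu_1$ (variational principle) and $\sum_\xi\_phi(\xi)|\FF{f_1}(\xi)|^2=\langle\oT^{\FF{\_phi}}_A f_1,f_1\rangle\ge 0$ (as $\_phi\ge 0$). Hence, raising the $\psi$-factor to the fourth power (using $|\frac{1}{|A|}\sum_\xi\psi(\xi)|\FF{A}(\xi)|^2|\le\mu_1$, which holds e.g. when $\psi\ge 0$, in which case $\oT^{\FF{\psi}}_A$ is positive semidefinite and $0\le\mu_1=\|\oT^{\FF{\psi}}_A\|$), it will be enough to prove the single estimate
$$
    \mu_1^2\Big(\sum_\xi\_phi(\xi)|\FF{f_1}(\xi)|^2\Big)\ \le\ \big(\E_3(A)\,\E_3(\_phi,\psi,\psi)\big)^{1/2}\,N\,.
$$

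To prove this, use $\|f_1\|_2=1$, the eigenvalue equation $\oT^{\FF{\psi}}_A f_1=\mu_1 f_1$ and self-adjointness to write $\mu_1^2\langle\oT^{\FF{\_phi}}_A f_1,f_1\rangle=\langle\oT^{\FF{\psi}}_A\oT^{\FF{\_phi}}_A\oT^{\FF{\psi}}_A f_1,f_1\rangle$, and then expand the matrix product on the right. Since $f_1$ is supported on $A$, carrying out the inner summations and opening $\FF{\_phi}(z-w)=\sum_t\_phi(t)e(-(z-w)t)$ and the convolution $k:=f_1*\FF{\psi}$ (so that $\FF{k}=N\psi^c\FF{f_1}$), the right-hand side becomes a quadratic form $\sum_{s,s'}\ov{\FF{f_1}(s)}\,\FF{f_1}(s')\,M(s,s')$ with kernel $M$ depending only on $A,\_phi,\psi$. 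Applying the Hilbert--Schmidt bound for quadratic forms and $\sum_s|\FF{f_1}(s)|^2=N\|f_1\|_2^2=N$ (by (\ref{F_Par})) gives $\mu_1^2\langle\oT^{\FF{\_phi}}_A f_1,f_1\rangle\le N(\sum_{s,s'}|M(s,s')|^2)^{1/2}$. It then remains to bound $\sum_{s,s'}|M(s,s')|^2$: squaring out $M$ and performing the exponential sums in $s,s'$ collapses it into a weighted fourfold correlation of $A$ (a sum of terms $\sum_x A(x)A(x-u)A(x+v)A(x+v-u')$ against Fourier coefficients of $\psi^2$ and $\_phi$), and one more Cauchy--Schwarz, separating the $A$-part from the weight-part, bounds the $A$-part by $\E_3(A)=\sum_w(A\circ A)(w)^3$ (via the convolution identities of Section~\ref{sec:definitions}, cf. the count $\T_3$) and the weight-part by $\E_3(\_phi,\psi,\psi)=\sum_w(\_phi\circ\_phi)(w)(\psi\circ\psi)(w)^2$ (through the $\Cf_3$--description of $\E_3(\_phi,\psi,\psi)$), the leftover powers of $N$ and $|A|$ cancelling. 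This gives the displayed estimate, hence (\ref{f:new_weights}).

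For the ``in particular'' assertion (\ref{f:new_weights'}), suppose $\_phi=\psi+c$. Then $\FF{\_phi}=\FF{\psi}+cN\d_0$, so $\oT^{\FF{\_phi}}_A=\oT^{(\FF{\psi})_{cN}}_A$ in the notation of Lemma~\ref{l:operators_balanced}; consequently $\oT^{\FF{\_phi}}_A$ has the same eigenfunctions as $\oT^{\FF{\psi}}_A$ and $\Spec(\oT^{\FF{\_phi}}_A)=\Spec(\oT^{\FF{\psi}}_A)+cN$, so $f_1$ is also the top eigenfunction of $\oT^{\FF{\_phi}}_A$. Hence $\sum_\xi\_phi(\xi)|\FF{f_1}(\xi)|^2=\langle\oT^{\FF{\_phi}}_A f_1,f_1\rangle=\mu_1+cN=\max\Spec(\oT^{\FF{\_phi}}_A)\ge\langle\oT^{\FF{\_phi}}_A\widetilde{A},\widetilde{A}\rangle=\frac{1}{|A|}\sum_\xi\_phi(\xi)|\FF{A}(\xi)|^2\ge 0$, and substituting $\sum_\xi\_phi(\xi)|\FF{f_1}(\xi)|^2\ge\frac{1}{|A|}\sum_\xi\_phi(\xi)|\FF{A}(\xi)|^2$ into (\ref{f:new_weights}) yields (\ref{f:new_weights'}).

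The step I expect to be the main obstacle is the final one: choosing the Cauchy--Schwarz split in $\sum_{s,s'}|M(s,s')|^2$ so that exactly $\E_3(A)$ and $\E_3(\_phi,\psi,\psi)$ appear --- with the correct normalizing powers of $N$ and $|A|$ --- rather than some coarser quantity (e.g. $\T_3(A)$ alone, or an $L^1$-norm of $\FF{\_phi}$); it is this choice that forces the particular weights $(A\circ A)^3$ and $(\_phi\circ\_phi)(\psi\circ\psi)^2$.
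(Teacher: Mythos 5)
Your overall frame matches the paper up to a point: you identify $\frac{1}{|A|}\sum_x\psi(x)|\FF{A}(x)|^2$ with $\langle\oT^{\FF{\psi}}_A A/|A|^{1/2},A/|A|^{1/2}\rangle\le\mu_1$ and $\sum_x\varphi(x)|\FF{f_1}(x)|^2$ with $\langle\oT^{\FF{\varphi}}_Af_1,f_1\rangle\ge0$, and your treatment of (\ref{f:new_weights'}) via Lemma \ref{l:operators_balanced} and the variational principle is essentially the paper's. The divergence --- and the genuine gap --- is in how you bound $\mu_1^2\langle\oT^{\FF{\varphi}}_Af_1,f_1\rangle=\langle\oT^{\FF{\psi}}_A\oT^{\FF{\varphi}}_A\oT^{\FF{\psi}}_Af_1,f_1\rangle$ from above. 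You discard all information about $f_1$ except $\|\FF{f_1}\|_2^2=N$ and reduce everything to a Hilbert--Schmidt estimate, namely $\|\oT^{\FF{\psi}}_A\oT^{\FF{\varphi}}_A\oT^{\FF{\psi}}_A\|_{HS}\le N\,(\E_3(A)\,\E_3(\varphi,\psi,\psi))^{1/2}$, which you never prove; and the sketch you give of it does not work as described. After squaring the kernel, you propose one Cauchy--Schwarz separating an ``$A$-part'' of the form $\sum_xA(x)A(x-u)A(x+v)A(x+v-u')$ from the weight part, claiming the $A$-part is bounded by $\E_3(A)$; but squaring a fourfold correlation and summing over its free shifts produces an $\E_4$-type count (eight copies of $A$), not $\E_3(A)=\sum_{a,b}\Cf_3(A)(a,b)^2$, which is a sum of squares of \emph{threefold} correlations. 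So the decisive step is missing, and the indicated bookkeeping does not supply it.

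The Hilbert--Schmidt inequality you need is in fact true, but the natural proof of it is exactly the paper's device, which your route bypasses: since $\varphi\ge0$ makes $\oT^{\FF{\varphi}}_A$ nonnegatively defined, the operator $B:=\oT^{\FF{\psi}}_A\oT^{\FF{\varphi}}_A\oT^{\FF{\psi}}_A$ is nonnegatively defined, whence $\|B\|_{HS}\le\tr B$; and it is the trace, not the Hilbert--Schmidt norm, that has the clean closed form $\tr B=\sum_{x,y,z\in A}\FF{\varphi}(x-y)\ov{\FF{\psi}(x-z)}\FF{\psi}(y-z)=\sum_{a,b}\Cf_3(A)(a,b)\FF{\varphi}(a-b)\ov{\FF{\psi}(a)}\FF{\psi}(b)$, to which a single Cauchy--Schwarz applies using $\sum_{a,b}\Cf_3(A)(a,b)^2=\E_3(A)$ and $\sum_{a,b}|\FF{\varphi}(a-b)|^2|\FF{\psi}(a)|^2|\FF{\psi}(b)|^2=\E_3(\varphi,\psi,\psi)N^2$. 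The paper works with $\sigma=\tr(\oT^{\FF{\psi}}_A\oT^{\FF{\varphi}}_A\oT^{\FF{\psi}}_A)$ throughout: it is bounded below by the single top-eigenvalue term (positivity of $\oT^{\FF{\varphi}}_A$) and above by this Cauchy--Schwarz; that trace identity in terms of $\Cf_3(A)$ is the central idea, and it is what your argument lacks. (A secondary caveat, which you flagged yourself: passing from $\frac{1}{|A|}\sum_x\psi(x)|\FF{A}(x)|^2\le\mu_1$ to the fourth power needs $\bigl|\frac{1}{|A|}\sum_x\psi(x)|\FF{A}(x)|^2\bigr|\le|\mu_1|$, e.g.\ when $\psi\ge0$; since the paper's applications take $\psi$ to be an indicator, this shared subtlety is not the main issue here.)
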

\begin{proof}
    Consider operators $\oT_1 = \oT^{\FF{\_phi}}_A$, $\oT_2 = \oT^{\FF{\psi}}_A$.
    By assumption $\_phi,\psi$ are real functions,
    thus $\ov{\FF{\_phi} (-x)} = \FF{\_phi} (x)$, $\ov{\FF{\psi}(-x)} = \FF{\psi} (x)$
    and hence $\oT_1, \oT_2$ are hermitian matrices.
    We have
$$
    \sigma := \sum_{x,y,z} \oT_1 (x,y) \ov{\oT_2 (x,z)} \oT_2 (y,z)
        =
            \sum_{x,y,z\in A} \FF{\_phi} (x-y) \ov{\FF{\psi} (x-z)} \FF{\psi} (y-z)
                =
$$
$$
                =
                    \sum_{a,b} \Cf_3 (A) (a,b) \FF{\_phi} (a-b) \ov{\FF{\psi} (a)} \FF{\psi} (b) \,.
$$
Using the Cauchy--Schwartz inequality, the Fourier transform
and formula
$$
    \sum_{a,b} \Cf^2_3 (A) (a,b) = \E_3 (A) \,,
$$
see e.g. \cite{sv}, we have
$$
    \sigma^2
        \le
            \E_3 (A) \sum_{a,b} |\FF{\_phi} (a-b)|^2 |\FF{\psi} (a)|^2 |\FF{\psi} (b)|^2
                =
                    \E_3 (A) \E_3 (\_phi, \psi, \psi) N^2 \,.
$$
On the other hand, let $\{ \mu_\a \}$, $\a \in [|A|]$ be the spectrum of the operator $\oT_2$ and
let $\{ f_\a \}$, $\a \in [|A|]$ be the correspondent eigenfunctions.
Substituting  the formula
$$
    \oT_2 (x,y) = \sum_{\a} \mu_\a f_\a (x) \ov{f_\a (y)}
$$
into the definition of the quantity $\sigma$, we get
$$
    \sigma = \sum_{\a} |\mu_\a|^2 \cdot \langle \oT_1 f_\a, f_\a \rangle \,.
$$
By assumption $\_phi (x) \ge 0$ and hence $\oT_1$ is a nonnegatively defined operator.
Thus, we have  (\ref{f:new_weights}).
To obtain (\ref{f:new_weights'}) note that $\FF{\_phi} = \FF{\psi} + c N\d_0$
and by Lemma \ref{l:operators_balanced} we get  in the situation
$$
    \sigma = \sum_{\a} |\mu_\a|^2 \mu_\a (\oT_1) \ge |\mu_1|^2 \mu_1 (\oT_1)
        \ge
    \left( \frac{1}{|A|} \sum_{x} \psi (x) |\FF{A} (x)|^2 \right)^2
            \cdot
        \left( \frac{1}{|A|} \sum_{x} \_phi (x) |\FF{A} (x)|^2 \right) \,.
$$
In the last inequality we have used the variational principle, see e.g. \cite{Horn-Johnson}.
This completes the proof.
$\hfill\Box$
\end{proof}

\bigskip

Clearly, inequality (\ref{f:new_weights'}) holds in the case of any three different nonnegative functions.


Now let us formulate the main result of the paper.


\begin{theorem}
    Let $\G \subseteq \F_p$ be a multiplicative subgroup, $|\G| \le p^{2/3}$.
    Then
\begin{equation}\label{f:M_2/3}
    \oM (\G) \ll |\G|^{1/2} p^{1/6} \log^{1/6} |\G| \,.
\end{equation}
    Further, if
\begin{equation}\label{cond:not_so_small}
    p^{10} \ll |\G|^{22} \E^{2} (\G) \cdot \log^{9} |\G|
\end{equation}
    then
\begin{equation}\label{f:13/2}
    \sum_{\xi \neq 0} |\FF{\G} (\xi)|^{32/5} \ll |\G|^{16/5} \E^{2/5} (\G) p^{6/5} \cdot \log^{9/5} |\G| \,.
\end{equation}
\label{t:M_2/3}
\end{theorem}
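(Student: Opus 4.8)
The plan is to apply Proposition~\ref{p:new_weights} to the set $A=\G\subseteq\F_p$ with weight functions that, on the Fourier side, are supported on (pieces of) the large spectrum of $\G$, and then to feed in $\E_3(\G)\ll|\G|^3\log|\G|$ from Corollary~\ref{cor:E_l}. The convenient form is: taking $c=0$ and $\_phi=\psi=Q$ in (\ref{f:new_weights'}) for an arbitrary $\G$--invariant set $Q\subseteq\F_p^*$ (which is $\ge 0$, so the hypothesis holds, and $\E_3(Q,Q,Q)$ in the notation of the Proposition equals the usual $\E_3(Q)=\sum_x(Q\circ Q)(x)^3$, since $\sum_{a,b}|\FF{Q}(a-b)|^2|\FF{Q}(a)|^2|\FF{Q}(b)|^2=p^2\E_3(Q)$), one gets the master inequality
$$
\Big(\frac{1}{|\G|}\sum_{\xi\in Q}|\FF{\G}(\xi)|^2\Big)^{6}\ \le\ \E_3(\G)\,\E_3(Q)\,p^2\,.
$$

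For (\ref{f:M_2/3}) I would apply this with $Q=\eta_0\G$, where $\eta_0\ne0$ is a point at which $|\FF{\G}|$ attains its maximum $\oM(\G)$. Since $\FF{\G}$ is constant on each coset of $\G$, $|\FF{\G}(\xi)|=\oM(\G)$ for every $\xi\in\eta_0\G$, so the left side equals $\oM(\G)^{12}$; and $\E_3(\eta_0\G)=\E_3(\G)$ because the dilation $x\mapsto x/\eta_0$ is a bijection of $\F_p$ commuting with $\circ$--convolution. Hence $\oM(\G)^{12}\ll\E_3(\G)^2p^2\ll|\G|^6p^2\log^2|\G|$, which is (\ref{f:M_2/3}) (here $|\G|\le p^{2/3}$ is used, through Corollary~\ref{cor:E_l}).

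For the higher--moment bound (\ref{f:13/2}) I would decompose the spectrum dyadically. For a dyadic $\Delta$ set $R_\Delta=\{\xi\ne0:|\FF{\G}(\xi)|\ge\Delta\}$, a $\G$--invariant set (a union of full cosets) with $|R_\Delta|\Delta^2\le p|\G|$. The master inequality with $Q=R_\Delta$, together with $\sum_{\xi\in R_\Delta}|\FF{\G}(\xi)|^2\ge\Delta^2|R_\Delta|$, yields
$$
\Delta^{12}|R_\Delta|^{6}\ \le\ |\G|^{6}\,\E_3(\G)\,\E_3(R_\Delta)\,p^2\,.
$$
The decisive input is an upper bound for $\E_3(R_\Delta)$: I would use $\E_3(R_\Delta)\le|R_\Delta|\cdot\E(R_\Delta)$ and then bound $\E(R_\Delta)=p^{-1}\sum_\xi|\FF{R_\Delta}(\xi)|^4$ by combining Parseval ($\sum_{\xi\ne0}|\FF{R_\Delta}(\xi)|^2\le p|R_\Delta|$) with the Fourier bounds (\ref{f:G-inv_bound_F}) for the $\G$--invariant set $R_\Delta$; it is the second/third terms of (\ref{f:G-inv_bound_F}) that bring the factor $\E(\G)$ into the final estimate. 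This gives a power--saving bound for $|R_\Delta|$ on a range of scales, which I would combine with the trivial bounds $|R_\Delta|\le p|\G|/\Delta^2$ and $|R_\Delta|\le p\,\E(\G)/\Delta^{4}$ (Parseval and fourth moments $\sum_\xi|\FF{\G}|^4=p\E(\G)$), and insert into $\sum_{\xi\ne0}|\FF{\G}(\xi)|^{32/5}\ll\sum_{\Delta}\Delta^{32/5}|R_\Delta|$, choosing at each dyadic scale whichever bound is smallest. The hypothesis (\ref{cond:not_so_small}) is exactly what forces the optimum of this balancing to sit in the regime where the main term $|\G|^{16/5}\E^{2/5}(\G)p^{6/5}\log^{9/5}|\G|$ dominates, i.e. $\G$ is large enough that the boundary contributions are negligible.

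I expect the main obstacle to be the bound for $\E_3(R_\Delta)$. Since $R_\Delta$ is a union of (possibly many) cosets of $\G$, the estimate $\E_3(R_\Delta)\le|R_\Delta|\E(R_\Delta)$ is already wasteful for a single coset (there $\E_3(\eta\G)=\E_3(\G)\ll|\G|^3\log|\G|$, whereas $|\eta\G|\,\E(\eta\G)$ is only of order $|\G|^{7/2}$); so extracting the correct power of $\Delta$ while keeping the logarithmic losses down to $\log^{9/5}|\G|$, and handling the case analysis dictated by (\ref{cond:not_so_small}), is the delicate part. The first bound (\ref{f:M_2/3}) is, by contrast, essentially immediate once the master inequality and the energy bound $\E_3(\G)\ll|\G|^3\log|\G|$ are in hand.
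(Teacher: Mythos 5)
Your treatment of (\ref{f:M_2/3}) is correct and is the paper's own argument: the paper likewise applies Proposition \ref{p:new_weights} with $\varphi=\psi=\xi\G$ at a maximizing coset and inserts $\E_3(\G)\ll|\G|^3\log|\G|$ from Corollary \ref{cor:E_l}.

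For (\ref{f:13/2}) there is a genuine gap, and it sits exactly at the step you flag as ``delicate''. With the symmetric choice $\varphi=\psi=R_\D$ everything hinges on a sufficiently strong bound for $\E_3(R_\D)$, and the tools you list --- $\E_3(R_\D)\le|R_\D|\E(R_\D)$, Parseval, the sup--bounds (\ref{f:G-inv_bound_F}), and the trivial bounds $|R_\D|\le p|\G|\D^{-2}$, $|R_\D|\le p\E(\G)\D^{-4}$ --- are quantitatively insufficient. What the dyadic summation needs at each level is $\D^{32/5}|R_\D|\ll|\G|^{16/5}\E^{2/5}(\G)\,p^{6/5}$ up to logarithms. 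Take $|\G|=p^{\tau}$ with $\tau$ slightly below $2/5$ and suppose $\E(\G)$ is of order $|\G|^{5/2}$, the extreme permitted by (\ref{f:E_2_E_3}); this satisfies (\ref{cond:not_so_small}), and your argument imports no upper bound on $\E(\G)$ that would exclude it. Then at levels $\D\asymp\oM(\G)\asymp|\G|^{1/2}p^{1/6}$ the fourth--moment bound overshoots the target by $p^{(2/5-\tau)/2}$, Parseval by more, and your master inequality $\D^{12}|R_\D|^6\le|\G|^6\E_3(\G)\E_3(R_\D)p^2$, with $\E_3(R_\D)\le|R_\D|\E(R_\D)$ and any of the three estimates of (\ref{f:G-inv_bound_F}) for $\max_{\xi\neq0}|\FF{R_\D}(\xi)|$, also overshoots by a positive power of $p$ (the zero--frequency part $|R_\D|^5/p$ alone would be fine; it is the nonzero--frequency contribution that is too large). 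So the summation does not close; one can check that a narrower failure window near $|\G|\approx p^{0.38}$ persists even if one additionally imports Theorem \ref{t:subgroup_energy}.

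The missing idea is the paper's asymmetric application of Proposition \ref{p:new_weights}: taking $\psi=Q_\rho=\{x:\rho<|\FF{\G}(x)|\le2\rho\}$ and $\varphi=\xi\G$ with $\xi\in Q_\rho$, the quantity to control becomes the mixed energy $\E_3(Q_\rho,Q_\rho,\xi\G)=\sum_x(Q_\rho\circ Q_\rho)(x)^2(\xi\G\circ\xi\G)(x)$, in which a coset weight survives. That is decisive: after a pigeonhole on the levels of $(Q_\rho\circ Q_\rho)$ one bounds $\E(Q_\rho,\xi\G)\ll|\G||Q_\rho|^{3/2}$ by the Stepanov--type Lemma \ref{l:improved_Konyagin_old} (valid under $|Q_\rho|\le|\G|^2$ and $|Q_\rho|^{3/2}|\G|^3\ll p^3$), and uses $\max_{x\neq0}(Q_\rho\circ Q_\rho)(x)\le\rho^{-4}p\,\E(\G)$; together these give $|Q_\rho|\ll(|\G|^{16}p^6\E^{2}(\G))^{1/5}\rho^{-32/5}\log^{4/5}|\G|$, which is exactly the per--level input the sum requires. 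Lemma \ref{l:improved_Konyagin_old} controls sums of $(Q\circ Q)$ over dilates of $\G$, not $\sum_x(R_\D\circ R_\D)(x)^3$ over all shifts, which is why keeping a coset inside the weight is essential and why your purely Fourier--side substitute loses. Note also that (\ref{cond:not_so_small}) enters the paper's proof not as a balancing condition but as the condition $|Q_\D|\le|\G|^2$ needed to invoke Lemma \ref{l:improved_Konyagin_old} at the chosen threshold.
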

\begin{proof}
    Put $t = |\G|$, $\E = \E(\G)$.
    There is $\xi \neq 0$ such that
    \begin{equation}\label{tmp:20.11.2013}
        \oM^2 := \oM^2 (\G) = t^{-1} \sum_{x\in \xi \G} |\FF{\G} (x)|^2
            = \langle \oT^{\FF{\xi \G}}_\G \G(x)/ t^{-1/2}, \G(x)/ t^{-1/2} \rangle \,.
    \end{equation}
    Using Proposition \ref{p:new_weights} with $\_phi = \psi = \xi \G$, we obtain by Lemma \ref{cor:E_l}
    $$
        \oM^{12} \le \E^2_3 (\G) p^2 \ll t^6 p^2 \log^2 t
    $$
    and inequality (\ref{f:M_2/3}) follows.

    To get  (\ref{f:13/2}), for any $\rho \in (0,\oM]$ consider the set
    $$
        Q=Q_\rho = \{ x\in \F^*_p ~:~ \rho < |\FF{\G} (x)| \le 2\rho \} \,.
    $$
    Let $q=|Q|$ and let us obtain an upper bound for $q$.
    Without loss of generality we can assume that $q>0$.
    Take any element $\xi$ from $Q$.
    Using  Lemma \ref{cor:E_l} and Proposition \ref{p:new_weights} with $\_phi = \xi \G$, $\psi = Q$, we have
    \begin{equation}\label{f:E_3,E_3_int}
        \left( \frac{q}{t} \right)^4 \rho^{12}
            \le
                \E_3 (\G) p^2 \E_3 (Q,Q,\xi \G)
                    \ll
                        t^3 p^2 \log t \cdot \E_3 (Q,Q,\xi \G) \,.
    \end{equation}
    Our task is to estimate $\E_3 (Q,Q,\xi \G)$.
    Applying the pigeonhole principle, we find $\omega$
    and a set $S_\omega \subseteq ((Q-Q)\cap (\xi \G - \xi \G)) \setminus \{ 0 \}$
    with the property $(Q \c Q) (x)$
    differ at most twice on $S_\omega$ and such that
    $$
        \E_3 (Q,Q,\xi \G) \ll q^2 t  + \omega \E(Q,\xi \G) \cdot \log t  \,.
    $$
    Using Lemma \ref{l:improved_Konyagin_old} and
    assuming that $q \le t^2$, we obtain
    \begin{equation}\label{tmp:15.11.2013_1}
        \E_3 (Q,Q,\xi \G) \ll q^2 t  + \omega t q^{3/2} \cdot \log t \,,
    \end{equation}
    provided by $q^{3/2} t^3 \ll p^{3}$.
    Further, the number $\o$ is bounded by
    $$
        \o \le
                \max_{x\neq 0} (Q\c Q) (x) \le \rho^{-4} \max_{x\neq 0} \sum_y |\FF{\G} (y)|^2 |\FF{\G} (x+y)|^2
            \le
                \rho^{-4} \E p \,.
    $$
    Thus, substitution of the last two estimates into (\ref{tmp:15.11.2013_1}) gives us
    \begin{equation}\label{tmp:15.11.2013_2}
        \E_3 (Q,Q,\xi \G) \ll q^2 t + t q^{3/2} (\rho^{-4} \E p)^{} \cdot \log t \,.
    \end{equation}
    It is easy to see that the second term in (\ref{tmp:15.11.2013_2}) dominates.
    Indeed, by formula (\ref{f:T_d}) of Theorem \ref{t:3_d_moment}, we get
    $$
        q \rho^8 \le p \T_4 (\G) \ll p t^{4+1/8} \le \E^2 p^2 \cdot \log^2 t
    $$
    because of $\E \ge t^2$ and $t<p$.
    Thus
    $$
        \left( \frac{q}{t} \right)^4 \rho^{12}
            \ll
                t^4 p^2 q^{3/2} (\rho^{-4} \E p)^{} \cdot \log^2 t
    $$
    and after some calculations, we get
    \begin{equation}\label{tmp:18.11.2013_1}
        q \ll (t^{16} p^6 \E^2 )^{1/5} \rho^{-32/5} \cdot \log^{4/5} t \,.
    \end{equation}
    It is easy to check that  (\ref{tmp:18.11.2013_1}) implies (\ref{f:13/2}).
    Indeed, for any parameter $\D>0$ and the Parseval identity, we have
$$
    \sigma:= \sum_{\xi \neq 0} |\FF{\G} (\xi)|^{32/5}
        \ll
            \D^{22/5} tp
                +
            t^{16/5} \E^{2/5} (\G) p^{6/5} \cdot \log^{9/5} t
                =
            \D^{22/5} tp + s \,,
$$
provided by $|Q_\D| \le t^2$ and $|Q_\D|^{3/2} t^3 \ll p^3$.
Taking $\D= (s/(tp))^{5/22}$, we see
$\sigma \ll s$.
It remains to check  that $|Q_\D| \le t^2$ and $|Q_\D|^{3/2} t^3 \ll p^3$.
By (\ref{tmp:18.11.2013_1}), we have
$$
    |Q_\D| \ll s \D^{-32/5} = (tp)^{16/11} s^{-5/11}
$$
and, hence, the first inequality is equivalent to
\begin{equation}\label{tmp:19.11.2013_1}
    p^{10} \ll t^{22} \E^{2} \cdot \log^{9} t \,,
\end{equation}
which is our condition.
Similarly, the second estimate is equivalent to
\begin{equation}\label{tmp:19.11.2013_2}
    t^{66} \ll p^{36} \E^{6} \cdot \log^{27} t \,.
\end{equation}
Inequality (\ref{tmp:19.11.2013_2}) follows easily from a trivial bound $\E \ge t^2$ and the assumption $t \le p^{2/3}$.
This completes the proof.
$\hfill\Box$
\end{proof}

\bigskip

In view of Theorem \ref{t:subgroup_energy} and formulas (\ref{f:G-inv_bound_F}), (\ref{f:M(G)_T_k})
of Lemma \ref{l:G-inv_bound_F} our bound (\ref{f:M_2/3}) beats (up to logarithms)
the previous estimates of $\oM (\G)$
in the interval
$|\G| \in (p^{52/141}, p^{29/48})$,
roughly,
and coincide with it in the  interval $|\G| \in (p^{29/48}, p^{2/3})$.
The constant $52/141$ appears if one take $l=3$, $m=2$ in formula (\ref{f:M(G)_T_k})
of Lemma \ref{l:G-inv_bound_F} and apply first bound of Theorem \ref{t:subgroup_energy}
as well as Theorem \ref{t:3_d_moment}.

\begin{remark}
Condition (\ref{cond:not_so_small}) says that the size of our subgroup $\G$ is not so small.
Using  a trivial bound $\E (\G) \ge |\G|^2$, we have that (\ref{cond:not_so_small}) takes place
for any multiplicative subgroup $\G$ such that $p^{5/13} \log^{-9/26} |\G| \ll |\G| \le p^{2/3}$.
\end{remark}

\begin{remark}
    The sum $\sigma$ from (\ref{f:13/2}) can be estimated as
$$
    \sigma \le p \T_3 (\G) \oM^{2/5} (\G) \,.
$$
    In view of inequality (\ref{f:T_d}) of Theorem \ref{t:3_d_moment}
    as well as bound (\ref{f:M_2/3}), we have
\begin{equation}\label{tmp:18.11.2013_3}
    \sigma \ll p |\G|^{17/4} (|\G|^{1/2} p^{1/6} \log^{1/6} |\G|)^{2/5} \,.
\end{equation}
    One can check that our estimate (\ref{f:13/2}) is better than (\ref{tmp:18.11.2013_3})
    in the range
    $|\G| \in ( p^{116/231}, p^{2/3} )$
    (up to logarithms).
\end{remark}

\begin{remark}
    The arguments of the proof of inequality (\ref{f:M_2/3}) show also that
\begin{equation}\label{f:char_6}
    \max_{\xi \neq 0} \left| \sum_{x} \chi(x) e^{\frac{2 \pi i x \xi}{p}} \right|
            \ll
        |\G|^{1/2} p^{1/6} \log^{1/6} |\G| \,,
\end{equation}
where $\chi$ is any multiplicative character on a subgroup $\G$, $|\G| \le p^{2/3}$.
Such sums are studied in \cite{Shp-DH}.
Moreover, using the variational principle (see e.g. \cite{Horn-Johnson})
and the convexity of the function $z\to z^3$ or just Proposition 45 of paper \cite{ss_E_k},
we get for any $x\in \F_p^*$
\begin{equation}\label{f:char_6'}
    \sum_{\a=1}^{|\G|} |\FF{\chi}_\a (x)|^6 \le p \E_3 (\G) \,,
\end{equation}
where $\{ \chi_\a \}_{\a\in [|\G|]}$ forms the orthogonal  family of all  multiplicative characters on $\G$.
\end{remark}

An application of Theorem \ref{t:M_2/3} gives us a result on basis properties
of multiplicative subgroups.

\begin{corollary}
    Let $\Gamma \subseteq \F_p^*$ be a multiplicative subgroup such that $|\Gamma| \gg p^{1/2} \log^{1/3} p$,
    and $-1 \in \G$.
    Then for all sufficiently large $p$, we have $\F^*_p \subseteq 5\Gamma$.
\label{c:6R_subgroups}
\end{corollary}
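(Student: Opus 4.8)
The plan is to argue by contradiction through the multiplicatively structured ``bad set''. Since $k\G$ is $\G$--invariant for every $k$ (as $\gamma\cdot k\G=k\G$), the set $Q:=\F^*_p\setminus 5\G$ is $\G$--invariant; so if $Q\neq\emptyset$ then $Q$ is a union of cosets and $|Q|\geq|\G|$. Writing $r_{5\G}=\G*\G*\G*\G*\G$ for the representation function, each $a\in Q$ forces $r_{5\G}(a)=0$; summing over $a\in Q$ and expanding by Fourier (with the conventions of \S\ref{sec:definitions}, and with $\FF{\G}$ real since $-1\in\G$) gives $0=\sum_{a\in Q}r_{5\G}(a)=\frac{|\G|^5|Q|}{p}+\frac1p\sum_{\xi\neq 0}\FF{\G}(\xi)^5\,\ov{\FF{Q}(\xi)}$, hence the basic inequality $\frac{|\G|^5|Q|}{p}\leq \frac1p\sum_{\xi\neq0}|\FF{\G}(\xi)|^5\,|\FF{Q}(\xi)|$. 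The rest of the proof is an estimate of the right--hand side that beats $\frac{|\G|^5|Q|}{p}$, forcing $|Q|<|\G|$, a contradiction.

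For that estimate I would exploit the $\G$--invariance of $Q$ in two ways: first, $|\FF{Q}(\xi)|\leq (|Q|p/|\G|)^{1/2}$ for $\xi\neq 0$ by (\ref{f:G-inv_bound_F}), used together with Parseval $\sum_{\xi}|\FF{Q}(\xi)|^2=p|Q|$; second, whenever a quantity of the shape $\sum_{x\in\G}(Q\c Q)(x)$ or an energy $\E(Q,\G)$ appears after a Cauchy--Schwarz step, control it by the Stepanov--type Lemma \ref{l:improved_Konyagin_old}. For the $\FF{\G}$ side I would decompose $\xi\neq0$ dyadically by the size of $|\FF{\G}(\xi)|$: on the light part use Parseval $\sum_{\xi}|\FF{\G}(\xi)|^2=p|\G|$, on the heavy part use the new moment bound (\ref{f:13/2}) of Theorem \ref{t:M_2/3} — valid here since $|\G|\gg p^{1/2}\gg p^{5/13}$, so (\ref{cond:not_so_small}) holds — and use (\ref{f:M_2/3}) to cap the heaviest coefficient; then substitute the sharpest energy bound (\ref{f:subgroup_energy}) and, where needed, (\ref{f:T_d}). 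Optimising the dyadic threshold and the use of (\ref{f:G-inv_bound_F}) should produce a bound for $|Q|$ of the form $|Q|\ll |\G|^{-a}p^{b}\E(\G)^{c}\log^{d}|\G|$ which, after inserting (\ref{f:subgroup_energy}), is $o(|\G|)$ — in particular $<|\G|$ — precisely when $|\G|\gg p^{1/2}\log^{1/3}p$ and $p$ is large; this is the required contradiction, so $Q=\emptyset$.

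The hard part is making these exponents actually meet the threshold $p^{1/2}$. Run crudely (e.g.\ bounding $\sum_{\xi\neq0}|\FF{\G}(\xi)|^5|\FF{Q}(\xi)|\le \oM(\G)^3\sum_{\xi}|\FF{\G}(\xi)|^2\cdot\max_{\xi\neq0}|\FF{Q}(\xi)|$) one only reaches $|\G|\gg p^{\theta}$ with $\theta$ roughly in $(0.55,0.6)$; so one must use \emph{simultaneously} (i) the full strength of the $\G$--invariance of $Q$ rather than the trivial Parseval bound on $\FF{Q}$, (ii) the better of (\ref{f:M_2/3}) and (\ref{f:13/2}) on each dyadic scale, and (iii) the sharpest available energy bound (\ref{f:subgroup_energy}) — and it is the balancing of the logarithmic losses in (\ref{f:13/2}) and (\ref{f:subgroup_energy}) that leaves exactly the stated $\log^{1/3}p$ room. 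Two routine but non--empty loose ends I would tidy at the end: the coset $c=1$ (equivalently, whether $0\in 1+4\G$, i.e.\ whether $-1\in 4\G$), which is handled by the ``peeling'' observation that $r_{5\G}(a)=0$ for all $a\in Q$ forces $(1+4\G)\cap Q=\emptyset$, so $Q$ additionally avoids the large set $4\G$ (whose size is controlled by (\ref{f:T_d}) with $d=4$); and absorbing the implied constants into the hypothesis ``$p$ sufficiently large''.
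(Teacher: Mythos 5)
Your proposal sets up a reasonable contradiction scheme (the bad set $Q=\F^*_p\setminus 5\G$ is $\G$--invariant, $0=\sum_{a\in Q}r_{5\G}(a)$ gives $|\G|^5|Q|\le\sum_{\xi\neq0}|\FF{\G}(\xi)|^5|\FF{Q}(\xi)|$), but the decisive step is missing: you never actually produce an estimate of the right--hand side that beats $|\G|^5|Q|$ at the threshold $|\G|\gg p^{1/2}\log^{1/3}p$. By your own admission the crude versions of your plan only reach $|\G|\gg p^{\theta}$ with $\theta\approx 0.55$--$0.6$, and the claim that ``optimising the dyadic threshold'' together with (\ref{f:13/2}), (\ref{f:M_2/3}), (\ref{f:G-inv_bound_F}) and (\ref{f:subgroup_energy}) closes the gap to exactly $p^{1/2}\log^{1/3}p$ is asserted, not proved. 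Running the natural optimizations (Cauchy--Schwarz with Parseval for $\FF{Q}$, or the heavy/light split of $|\FF{\G}|$ using (\ref{f:13/2}) with the best energy bound $\E(\G)\ll|\G|^3p^{-1/3}\log|\G|$ in the relevant range) one lands around $|\G|\gg p^{5/9}$, still well above $p^{1/2}$; nothing in your list of ingredients visibly supplies the missing power of $p$. The final paragraph about ``the coset $c=1$'' and $-1\in4\G$ does not repair this and is itself not a coherent step.

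The paper closes the gap by a genuinely different and much shorter route, whose key extra input is a sumset growth theorem that your plan never invokes. Writing $S=\G+\G=\G-\G$, $n=|\G|$, $m=|S|$, and assuming some coset $\lambda\G$ misses $5\G=S+S+\G$, one has $0=\sum_{\xi}\FF{S}^2(\xi)\FF{\G}(\xi)\FF{\lambda\G}(\xi)$, whence $n^2m^2\le\oM^2(\G)\,mp$ by Parseval applied to $S$; inserting $\oM(\G)\ll n^{1/2}p^{1/6}\log^{1/6}n$ from Theorem \ref{t:M_2/3} gives $m\ll n^{-1}p^{4/3}\log^{1/3}n$, and this contradicts the lower bound $m\gg\min\{np^{1/3}\log^{-1/3}n,\ n^{7/3}p^{-1/3}\log^{-2/3}n\}$ (Theorem 1.1 of \cite{ss}) once $n\gg p^{1/2}\log^{1/3}p$. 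It is precisely this lower bound on $|\G+\G|$ (i.e.\ growth of the sumset, ultimately an energy statement for $\G$) that supplies what your scheme lacks; without an ingredient of that type, bounding $\sum_{\xi\neq0}|\FF{\G}(\xi)|^5|\FF{Q}(\xi)|$ by moment and $\G$--invariance estimates alone does not appear to reach $p^{1/2}$, so the proposal as it stands has a genuine gap.
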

\begin{proof}
Put $S = \Gamma+\Gamma = \G - \G$, $n=|\Gamma|$, $m = |S|$, and $\rho = \oM (\G)$.
Applying inequality (\ref{f:M_2/3}) of Theorem \ref{t:M_2/3}, we get $\rho \le n^{1/2} p^{1/6} \log^{1/6} n$.
If $\F_p^* \not\subseteq 5\Gamma$ then for some $\lambda\neq 0,$ we
obtain
$$
    0 = \sum_{\xi} \FF{S}^2 (\xi) \FF{\Gamma} (\xi) \FF{\lambda \Gamma} (\xi)
        =
            m^2n^2+\sum_{\xi\neq 0} \FF{S}^2 (\xi) \FF{\Gamma} (\xi) \FF{\lambda \Gamma} (\xi) \,.
$$
Therefore, by the upper bound for $\rho$ and the Parseval identity, we get
$$
    n^2m^2
        \le \rho^2 mp
        \ll (n^{1/2} p^{1/6} \log^{1/6} n)^2 mp \,.
$$
Now by Theorem 1.1 from \cite{ss}, say, that is
$m \gg \min\{ n p^{1/3} \log^{-1/3} n, n^{7/3} p^{-1/3} \log^{-2/3} n \}$,
and the assumption $|\Gamma| \gg p^{1/2} \log^{1/3} p$, we obtain the required result.
$\hfill\Box$
\end{proof}


\section{An application to Heilbronn's exponential sum}
\label{sec:application}

Heilbronn's exponential sum is defined by
\begin{equation}\label{def:Heilbronn_sum}
    S(a) = \sum_{n=1}^p e^{2 \pi i \cdot \frac{an^p}{p^2} } \,.
\end{equation}

D.R. Heath--Brown obtained in \cite{H} the first nontrivial upper bound for the sum.
After that the result was improved in papers \cite{H-K}, \cite{s_heilbronn}, \cite{sv_heilbronn}.

The method of the previous section gives a new upper bound for Heilbronn's exponential sum.

\begin{theorem}
    Let $p$ be a prime, and $a\neq 0 \pmod p$.
    Then
    $$
        |S(a)| \ll p^{\frac{5}{6}} \log^{\frac{1}{6}} p \,.
    $$
\label{t:main-}
\end{theorem}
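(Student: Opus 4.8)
The plan is to mimic the argument of Theorem~\ref{t:M_2/3} but working in the ring $\Z/p^2\Z$ rather than in $\F_p$. First I would recall the standard reduction (due to Heath--Brown, see also \cite{H-K}, \cite{s_heilbronn}) which identifies $S(a)$ with a nonzero Fourier coefficient $\oM(\G)$ of a multiplicative subgroup $\G \subseteq (\Z/p^2\Z)^*$ of order $p-1$: namely $\G$ is the group of $p$-th powers, and $n \mapsto n^p$ together with the character $\xi \mapsto e(a\xi/p^2)$ converts $S(a)$ into $\sum_{x\in \G} e(2\pi i x\xi/p^2)$ for a suitable $\xi$. Since $|\G| = p-1 \approx p^{1/2}$ relative to the ambient group of size $N = p^2$, this is exactly the ``medium size'' regime, with $|\G| \le N^{2/3}$ comfortably satisfied.

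Next I would run the operator argument of Proposition~\ref{p:new_weights} and the opening lines of the proof of Theorem~\ref{t:M_2/3}: write $\oM^2 = t^{-1}\sum_{x\in\xi\G}|\FF{\G}(x)|^2$ as $\langle \oT^{\FF{\xi\G}}_\G \G t^{-1/2}, \G t^{-1/2}\rangle$, apply (\ref{f:new_weights'}) with $\_phi = \psi = \xi\G$, and obtain $\oM^{12} \le \E_3^2(\G) N^2$. The only genuinely new input needed is the bound $\E_3(\G) \ll |\G|^3 \log|\G|$ for this particular subgroup of $\Z/p^2\Z$; this is precisely the content of the cube-moment estimate used in \cite{s_heilbronn}, \cite{H-K}, which follows from a Stepanov-type / Weil-type input analogous to Lemma~\ref{l:improved_Konyagin_old} adapted to $\Z/p^2\Z$ (and in fact $\E(\G)\ll|\G|^{5/2}$ holds there as well). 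Granting $\E_3(\G) \ll |\G|^3\log|\G|$, we get
$$
    \oM^{12} \ll |\G|^6 N^2 \log^2|\G| = (p-1)^6 p^4 \log^2 p \,,
$$
hence $\oM(\G) \ll p^{5/6}\log^{1/6}p$, which is the claimed bound on $|S(a)|$.

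The main obstacle is making the combinatorial toolkit (Lemma~\ref{l:improved_Konyagin_old}, Corollary~\ref{cor:E_l}, the identity $\sum_{a,b}\Cf_3^2(A)(a,b) = \E_3(A)$) genuinely available over $\Z/p^2\Z$: the Fourier analysis in Section~\ref{sec:definitions} is stated for a general finite abelian group $\Gr$, so the operator identities go through verbatim with $N = p^2$, but the arithmetic estimate bounding $\sum_{x\in\G}(Q\circ Q_1)(x)$ and hence $\E_3(\G)$ rests on Stepanov's method, which must be invoked in its $\Z/p^2\Z$ form. I would cite the relevant lemma from \cite{H-K} or \cite{s_heilbronn} for this; no new number theory is required beyond what is already in the literature. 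Once $\E_3(\G)\ll |\G|^3\log|\G|$ is in hand, the rest is the formal manipulation above and the reduction to $\oM(\G)$, so the proof is short. A secondary point to check is that $\xi$ in the reduction is indeed a nonzero, nontrivial frequency so that Proposition~\ref{p:new_weights} applies with the right normalization; this is routine given the explicit form of the reduction.
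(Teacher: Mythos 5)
Your proposal is correct and is essentially the paper's own argument: the paper likewise reduces $\max_{a\neq 0}|S(a)|$ to $\oM(\G)$ for the subgroup (\ref{def:H_Gamma}) of $p$-th powers in $\Z/(p^2\Z)$, imports the Stepanov-type bound $\E_3(\G) \ll p^3 \log p$ from \cite{s_heilbronn}, \cite{H-K}, and then runs the opening step of the proof of Theorem \ref{t:M_2/3} (Proposition \ref{p:new_weights} with $\_phi=\psi=\xi\G$ and $N=p^2$), giving $\oM^{12} \ll p^{10}\log^2 p$ exactly as in your computation. No gaps beyond the routine points you already flag.
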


Indeed, consider
the following multiplicative subgroup
\begin{equation}\label{def:H_Gamma}
    \G = \{ m^p ~:~ 1\le m \le p-1 \} = \{ m^p ~:~ m \in \Z/(p^2 \Z) \,, m \neq 0 \} \subseteq \Z/(p^2 \Z) 
\end{equation}
and note that $\max_{a\neq 0} |S(a)| = \oM (\G)$.

\bigskip

To obtain Theorem \ref{t:main-},
we need in a lemma, see e.g. \cite{s_heilbronn},
which is another consequence of Stepanov's method \cite{H-K}.

\begin{lemma}
    For Heilbronn's subgroup (\ref{def:H_Gamma}), one has
\begin{equation}\label{f:E_3_Heilbronn}
    \E_3 (\G) \ll p^3 \log p \,.
\end{equation}
\end{lemma}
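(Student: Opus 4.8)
The plan is to transplant, essentially verbatim, the derivation of the bound $\E_3(\G) \ll |\G|^3 \log |\G|$ for multiplicative subgroups of $\F_p$ (the second estimate of Corollary \ref{cor:E_l}) to the Heilbronn subgroup $\G \subseteq \Z/(p^2\Z)$, where $|\G| = p-1$. Writing $r(s) = (\G \circ \G)(s)$, we have $\E_3(\G) = \sum_s r(s)^3 = |\G|^3 + \sum_{s \neq 0} r(s)^3$, since $r(0) = |\G|$. The diagonal term $|\G|^3 = (p-1)^3$ is already of the desired order, so the whole problem reduces to bounding $\sum_{s \neq 0} r(s)^3$ by $O(p^3 \log p)$.

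First I would record two structural facts. Using the parametrization of $\G$ by Fermat quotients — every $a \in \G$ has the form $a \equiv a_0 + p\,a_0 q(a_0) \pmod{p^2}$ with $a_0 \in \{1,\dots,p-1\}$, and $a$ is determined by $a_0 \bmod p$ — one sees that $a,b\in\G$ with $a \equiv b \pmod p$ forces $a = b$. Hence $r(s) = 0$ for every nonzero non-unit $s$ (a nonzero multiple of $p$): all nonzero differences are units. Consequently each level set $Q_\rho := \{s : r(s) > \rho\}$ consists of units only, and since $r(\lambda s) = r(s)$ for $\lambda \in \G$, each $Q_\rho$ is $\G$--invariant, so the level sets are exactly the kind of sets to which a Stepanov--Konyagin estimate applies.

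The heart of the argument is a popular-difference bound, the analogue for the Heilbronn subgroup of Lemma \ref{l:improved_Konyagin_old}. Granting the $\Z/(p^2\Z)$ version $\sum_{x \in \G}(Q \circ \G)(x) \ll |\G|^{1/3}|Q|^{2/3}|\G|^{2/3} = |\G|\,|Q|^{2/3}$ for $\G$--invariant $Q$ in the admissible size range, I would combine it with the identity $\sum_{s \in Q}(\G \circ \G)(s) = \sum_{x \in \G}(Q \circ \G)(x)$ to obtain, for $Q = Q_\rho$, the chain $\rho\,|Q_\rho| < \sum_{s \in Q_\rho} r(s) \ll |\G|\,|Q_\rho|^{2/3}$, whence $|Q_\rho| \ll |\G|^3 \rho^{-3} = p^3\rho^{-3}$. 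A dyadic decomposition over the values of $r(s)$ (there are $O(\log p)$ scales, as $1 \le r(s) \le |\G| = p-1$) then gives $\sum_{s\neq 0} r(s)^3 \ll \sum_{\rho \text{ dyadic}} \rho^3 |Q_\rho| \ll \sum_\rho \rho^3 \cdot p^3 \rho^{-3} \ll p^3 \log p$, capping $|Q_\rho|$ by the trivial count of units at the smallest scales. Adding back the diagonal term yields $\E_3(\G) \ll p^3 \log p$.

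The routine parts here are the reduction, the vanishing of $r$ on non-units, and the dyadic summation; the genuine difficulty is wholly contained in the popular-difference bound, i.e.\ the Stepanov-type inequality over $\Z/(p^2\Z)$. This is the exact analogue of Lemma \ref{l:improved_Konyagin_old} and is precisely the consequence of the Heath--Brown--Konyagin construction (an auxiliary polynomial over $\Z/(p^2\Z)$ vanishing to high order at the relevant configuration of $p$-th powers) referenced in \cite{H-K,s_heilbronn}. Establishing that polynomial identity and the resulting zero count is where all the real work lies, and I would invoke it as the key input rather than reprove it.
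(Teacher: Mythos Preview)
The paper does not actually prove this lemma: it merely cites it (``see e.g.\ \cite{s_heilbronn}'') and notes that it is ``another consequence of Stepanov's method \cite{H-K}''. Your sketch is correct and is exactly the argument one finds in those references: it is the Heilbronn analogue of the derivation of $\E_3(\G)\ll |\G|^3\log|\G|$ in Corollary~\ref{cor:E_l}, with Lemma~\ref{l:improved_Konyagin_old} replaced by the Heath--Brown--Konyagin auxiliary--polynomial bound over $\Z/(p^2\Z)$, followed by the same level-set/dyadic summation. Your structural observations (nonzero differences in $\G$ are units, level sets are $\G$--invariant) are correct and are precisely what makes the Stepanov input applicable; as you rightly identify, the entire substance lies in that input, which the paper likewise takes as a black box.
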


After that apply the arguments of the proof of Theorem \ref{t:M_2/3}, Fourier transform on $\Z/(p^2\Z)$
 and use bound (\ref{f:E_3_Heilbronn}).
It gives us Theorem \ref{t:main-}.

\section{Appendix}
\label{sec:appendix}

Corollary \ref{cor:E_l} holds for subgroups of size $O(p^{2/3})$.
Now we extend the result of the statement on $\E_3 (\G)$ for large subgroups.

\begin{proposition}
    Let $\G \subseteq \F_p$ be a multiplicative subgroup, $p^{1/2} \ll |\G| \ll p^{3/4}$.
    Then
\begin{equation}\label{f:E_3_large}
    \E_3 (\G) = \frac{|\G|^6}{p^2} + O( |\G|^3 \log^3 |\G|) + O(p^{1/3} \oM^{4/3} (\G) |\G|^{5/3} \log^3 |\G|) \,.
\end{equation}
    Moreover, for any $\G$--invariant set $S$, we get
$$
    \E_3 (\G,S) = \frac{|\G|^2 |S|^4}{p^2} + O\left( |S|^2 |\G| \log^3 |\G| + \frac{|S|^3 \oM^2 (\G)}{p} \right)
                        +
$$
\begin{equation}\label{f:E_3_large_S}
                        + O(p^{2/3} \oM^{2/3} (\G) |S|^2 |\G|^{-1/3} \log^3 |\G|) \,,
\end{equation}
and
\begin{equation}\label{f:E_3_large_S'}
    \E_3 (S) = \frac{|S|^6}{p^2}
                    + O(|S|^4 |\G|^{-1})
                        + O(p^{}  |S|^3 |\G|^{-4/3} \log^3 |\G|) \,.
\end{equation}
\label{p:E_3_large}
\end{proposition}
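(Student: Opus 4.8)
The plan is to reprove the circle-method decomposition of $\E_3(\G)$ exactly as in the proof of Corollary \ref{cor:E_l}, but keeping track of the new error terms that arise because $|\G|$ is now allowed to exceed $p^{2/3}$. The starting point is the identity $\E_3(\G) = \sum_x (\G \circ \G)(x)^3$; writing the middle factor via Fourier inversion and using that $(\G\c\G)(0)=|\G|$, one isolates the main term $|\G|^6/p^2$ coming from $\xi=0$, and the remaining sum reduces to controlling $\sum_{\xi\ne 0} |\FF{\G}(\xi)|^2 \cdot T$, where $T$ encodes the diagonal contribution. More precisely, I would follow the route through the quantity $\sum_{x} (\G\c\G)(x)^2 (\G\c\G)(x)$ and split according to whether the "popular difference" count $(\G\c\G)(x)$ is large or small, dyadically pigeonholing as in the proof of Theorem \ref{t:M_2/3}; on each dyadic level $Q_\rho = \{x : \rho < (\G\c\G)(x) \le 2\rho\}$ one applies Lemma \ref{l:improved_Konyagin_old} to $\sum_{x\in\G}(Q\c Q)(x)$-type sums.

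First I would carry out the Fourier expansion to produce the main term and a leftover of the shape $O\big(\sum_{\xi\ne 0}|\FF\G(\xi)|^2 (\G\c\G*\G\c\G)(\xi\text{-shifted})\big)$, which after Parseval is $O(|\G|\cdot \E(\G)\cdot p^{-1})$ plus genuinely "diagonal" terms. Second, I would treat the diagonal terms by the dyadic decomposition: the set $Q_\rho$ is $\G$-invariant, so Lemma \ref{l:improved_Konyagin_old} gives $\sum_{x\in\G}(Q_\rho\c Q_\rho)(x) \ll |\G|^{1/3}|Q_\rho|^{4/3}$, and a trivial bound $\rho |Q_\rho| \le \E(\G) \ll |\G|\,\oM^2(\G) $ (via $\sum_y|\FF\G(y)|^2|\FF\G(x+y)|^2 \le \oM^2(\G)\, p|\G|$ — note the Fourier support is controlled by $\oM$) bounds $|Q_\rho|$ in terms of $\rho$, $p$, $|\G|$ and $\oM(\G)$. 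Summing the $O(\log|\G|)$ dyadic levels and the extra $\log$ from the pigeonhole on $\omega$ produces the $\log^3|\G|$ factors. Balancing the two competing contributions — the one linear in $|Q_\rho|$ and the one from Lemma \ref{l:improved_Konyagin_old} — yields the term $p^{1/3}\oM^{4/3}(\G)|\G|^{5/3}\log^3|\G|$, while the crude term $|\G|^3\log^3|\G|$ absorbs everything else. The validity conditions in Lemma \ref{l:improved_Konyagin_old}, namely $|Q||Q_1|\ll|\G|^4$ and $|Q||Q_1||\G|^2 \ll p^3$, have to be checked; here $|Q_\rho|\le |\G|$ and $|\G|\ll p^{3/4}$ make $|Q_\rho|^2|\G|^2 \ll p^{7/2}$, which is worse than $p^3$, so one must instead use that the relevant level sets with $\rho$ not too small are much smaller than $|\G|$ — this is where the restriction $|\G|\ll p^{3/4}$ is really used.

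For the two $\G$-invariant versions (\ref{f:E_3_large_S}) and (\ref{f:E_3_large_S'}), the argument is the same with $\G$ replaced by $S$ in the appropriate slots: $\E_3(\G,S) = \sum_x (\G\c\G)(x)(S\c S)(x)^2$ and $\E_3(S)=\sum_x (S\c S)(x)^3$, and since $S$ is $\G$-invariant the level sets $\{x: (S\c S)(x)\sim\rho\}$ are again $\G$-invariant, so Lemma \ref{l:improved_Konyagin_old} still applies with one of the two sets taken to be $\G$. The bound $\rho|Q_\rho| \le \sum_y |\FF{S}(y)|^2|\FF S(x+y)|^2 \le \oM^2(\G)\cdot$(something), using that $\FF S$ is supported where $\FF\G$ is large up to the standard $\G$-invariance Lemma \ref{l:G-inv_bound_F}, feeds the $\oM(\G)$-dependence into the error. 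The main obstacle I expect is bookkeeping: making sure the dyadic sum is truncated correctly (tiny $\rho$ contribute trivially, large $\rho$ are constrained by $\oM$ and by the Stepanov input), and verifying that the side conditions of Lemma \ref{l:improved_Konyagin_old} hold on every dyadic level given only $|\G|\ll p^{3/4}$ — this is exactly the point at which the range of validity in the statement is forced, and getting the exponents $5/3$, $2/3$, $-1/3$, $-4/3$ to come out requires care in the balancing step rather than any new idea.
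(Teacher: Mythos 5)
There is a genuine gap: your dyadic decomposition is performed on the wrong side. After extracting the main term, the quantity that has to be estimated is the off-diagonal trilinear Fourier sum $\sigma=\frac{1}{p^2}\sum_{x,y,x-y\neq 0}|\FF{\G}(x)|^2|\FF{\G}(y)|^2|\FF{\G}(x-y)|^2$, and the paper's key idea is to pigeonhole on the size of $|\FF{\G}(\xi)|$, i.e.\ to work with the \emph{frequency-space} level sets $\Omega_j=\{\xi\neq 0: |\FF{\G}(\xi)|\sim \rho_j\}$. These sets are $\G$--invariant (because $\FF{\G}(\gamma\xi)=\FF{\G}(\xi)$ for $\gamma\in\G$), which is what makes both Lemma \ref{l:G-inv_bound_F} and Lemma \ref{l:improved_Konyagin_old} applicable to the resulting additive--triple count $\sum_{x\in Q_1}(Q_2*Q_3)(x)$; moreover $\rho_j\le\oM(\G)$ enters directly, which is how the error term $p^{1/3}\oM^{4/3}(\G)|\G|^{5/3}$ arises. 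The argument then splits into two regimes: when $q_1q_2q_3\gg p^3|\G|^{-1}$ one uses Parseval together with the bound $|\FF{Q}(\xi)|\le(|Q|p/|\G|)^{1/2}$ for invariant sets to get the contribution $\ll|\G|^3\log^3|\G|$, and when $q_1q_2q_3\ll p^3|\G|^{-1}$ the side conditions of Lemma \ref{l:improved_Konyagin_old} are verified from $q_j\ge|\G|$ and $|\G|\gg p^{1/2}$, and Stepanov's bound plus $q_j\ll p|\G|\rho_j^{-2}$, $\rho_j\le\oM(\G)$ gives the second error term. Your plan instead decomposes the level sets of the physical convolution $(\G\c\G)(x)$ and applies Lemma \ref{l:improved_Konyagin_old} to $\sum_{x\in\G}(Q_\rho\c Q_\rho)(x)$: this is the scheme that yields the \emph{upper bound} $\E_3(\G)\ll|\G|^3\log|\G|$ for $|\G|\le p^{2/3}$, but it cannot produce the asymptotic formula here, because the bulk levels $(\G\c\G)(x)\approx|\G|^2/p$ (which carry the main term $|\G|^6/p^2$) have level sets of size up to $\sim p$, for which neither side condition of Lemma \ref{l:improved_Konyagin_old} holds and for which one needs an asymptotic, not an upper bound; nor is there any mechanism in this decomposition that makes the error depend on $\oM(\G)$ in the stated way.

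Several of the auxiliary inequalities you invoke to import $\oM(\G)$ are also false in this range. One has $\rho^2|Q_\rho|\le\E(\G)$, not $\rho|Q_\rho|\le\E(\G)$; the claim $\E(\G)\ll|\G|\oM^2(\G)$ fails because $\E(\G)\ge|\G|^4/p$, which can dominate $|\G|\oM^2(\G)$ once $|\G|\gg p^{1/2}$ (the correct statement is $\E(\G)\le|\G|^4/p+|\G|\oM^2(\G)$); and $|Q_\rho|\le|\G|$ is unjustified for convolution level sets, whose size is only bounded by $\min(p,|\G|^2/\rho)$. Consequently your proposed verification of the conditions $|Q||Q_1|\ll|\G|^4$ and $|Q||Q_1||\G|^2\ll p^3$ (via ``$|Q_\rho|\le|\G|$ and $|\G|\ll p^{3/4}$'') does not go through; in the paper these conditions are checked for the Fourier level sets only in the regime $q_1q_2q_3\ll p^3|\G|^{-1}$, using $q_j\ge|\G|$ and $|\G|\gg p^{1/2}$, while the complementary regime is handled by a different tool (Parseval plus the invariant-set Fourier bound). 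The missing two-regime analysis on the Fourier side is the substance of the proof, so the proposal as written would not yield (\ref{f:E_3_large}), nor its variants (\ref{f:E_3_large_S}), (\ref{f:E_3_large_S'}).
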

\begin{proof}
Let $t=|\G|$, $\oM = \oM(\G)$.
By the first bound of Lemma \ref{l:G-inv_bound_F} and formulas (\ref{F_Par}), (\ref{f:F_svertka}), we have
$$
    \E_3 (\G) = \frac{1}{p^2} \sum_{x,y} |\FF{\G} (x)|^2 |\FF{\G} (y)|^2 |\FF{\G} (x-y)|^2
        =
$$
\begin{equation}\label{tmp:16.11.2013_1}
        =
            \frac{t^6}{p^2} + O\left( \frac{t^3 \oM^2}{p} \right) + \frac{1}{p^2} \sum_{x \neq 0,\, y \neq 0,\, x-y \neq 0} |\FF{\G} (x)|^2 |\FF{\G} (y)|^2 |\FF{\G} (x-y)|^2
                =
                    \frac{t^6}{p^2} + O\left( \frac{t^3 \oM^2}{p} \right) + \sigma \,.
\end{equation}
Let $\omega = t^{1/2}$, and
$$
    \Omega_j = \{ \xi \in \F^*_p ~:~ \omega 2^{j-1} < |\FF{R} (\xi)| \le \omega 2^{j} \}\,,
                        \quad j \in [l],\, \quad l \ll \log t \,.
$$
Clearly, there are $j_1,j_2,j_3 \in [l]$ such that
$$
    \sigma \le \frac{l^3}{p^2} \sum_{x\in \Omega_{j_1},\, y\in \Omega_{j_2},\, x-y \in \Omega_{j_3}} |\FF{\G} (x)|^2 |\FF{\G} (y)|^2 |\FF{\G} (x-y)|^2 \,.
$$
Let $Q_i = \Omega_{j_i}$, $q_i = |Q_i|$, and $\rho_i = \omega 2^{j_i}$, where $i\in [3]$.
Then
\begin{equation}\label{f:error_term_for_E_3}
    \sigma
        \le
            \frac{l^3}{p^2} \rho^2_1 \rho^2_2 \rho^2_3 \sum_{x\in Q_1} (Q_2 * Q_3) (x) \,.
\end{equation}
Suppose that $q_1 q_2 q_3 \gg p^3 t^{-1}$.
Using the first bound of Lemma \ref{l:G-inv_bound_F} and Parseval identity (\ref{F_Par}), we get
$$
    \sum_{x\in Q_1} (Q_2 * Q_3) (x) = \frac{1}{p} \sum_z \FF{Q}_1 (z) \FF{Q}_2 (z) \FF{Q}_3 (z)
        =
            O \l( \frac{q_1 q_2 q_3}{p} \r) \,.
$$
Whence, by a trivial bound $q_j \ll pt \rho^{-2}_j$, we obtain
$$
    \sigma \ll \frac{l^3}{p^3} \rho^2_1 \rho^2_2 \rho^2_3 q_1 q_2 q_3 \ll l^3 t^3
$$
and the result follows  in the case.

Now suppose that $q_1 q_2 q_3 \ll p^3 t^{-1}$.
Clearly, $q_j \ge t$, $j=1,2,3$.
If $t q_2 q_3 \gg t^{5}$ then we have a contradiction with the assumption  $t\gg p^{1/2}$.
Thus, $t q_2 q_3 \ll t^{5}$, and, similarly, $t q_1 q_2 \ll t^{5}$ and $t q_1 q_3 \ll t^{5}$.
Because of  $q_1 q_2 q_3 \ll p^3 t^{-1}$  and
$q_1 \ge t$,
we get
$t^2 q_2 q_3 \ll p^3$.
Returning to formula (\ref{f:error_term_for_E_3}) and
applying
Lemma \ref{f:improved_Konyagin_old3}, we obtain
$$
    \sigma
        \ll
            \frac{l^3}{p^2} \rho^2_1 \rho^2_2 \rho^2_3 \cdot \frac{q_1 (q_2 q_3)^{2/3}}{t^{2/3}} \,.
$$
Using the bound $q_j \ll pt \rho^{-2}_j$ several times
as well as the estimate $\rho_j \le \oM$, we get
$$
    \sigma \ll \frac{l^3}{t^{2/3} p^2} \rho^{2/3}_2 \rho^{2/3}_3 (pt)^{7/3}
        \ll
            l^3 t^{5/3} \oM^{4/3} (\G) p^{1/3}
$$
as required.
The proofs of formulas (\ref{f:E_3_large_S}), (\ref{f:E_3_large_S'}) are similar,
one just need to apply  Lemma \ref{l:G-inv_bound_F}
for $\G$--invariant set $S$ not $\G$.
$\hfill\Box$
\end{proof}

\begin{remark}
The term $O(p^{1/3} \oM^{4/3} (\G) |\G|^{5/3} \log^3 |\G|)$ in (\ref{f:E_3_large}) is quite tight up to
our current knowledge of multiplicative subgroups.
Indeed, suppose that we have $\oM(\G) \sim p$ and $|\FF{\G}(x)|^2$, $|\FF{\G}(y)|^2$,
$|\FF{\G}(x-y)|^2 \sim \oM^2 (\G)$
in formula (\ref{tmp:16.11.2013_1}).
The choice of the Fourier coefficients is admissible  in view of the first bound from formula (\ref{f:G-inv_bound_F})
of Lemma \ref{l:G-inv_bound_F}.
Now suppose that for $x,y\in \xi \G$ the following holds $(\xi G \c \xi \G) (x-y) \gg |\G|^{2/3}$.
This choice is also possible in view of Lemma \ref{l:improved_Konyagin_old}.
Thus, in the situation we get $p |\G|^{5/3}$ which coincide with  (\ref{f:E_3_large}).
\end{remark}

\begin{remark}
For $|\G| \gg p^{3/4}$ another
asymptotic formulas hold.
\end{remark}

Using formulas (\ref{f:E_3_large}), (\ref{f:E_3_large_S}), (\ref{f:E_3_large_S'})
as well as an "asymmetric"\, (applied to the correspondent balanced functions)
case of Proposition \ref{p:new_weights} one can obtain
upper bounds for sums similar to (\ref{f:13/2})
in the situation when  the  size of our subgroups
greater than $p^{2/3}$.
We do not make such calculations here.

\bigskip

\no{Division of Algebra and Number Theory,\\ Steklov Mathematical
Institute,\\
ul. Gubkina, 8, Moscow, Russia, 119991}
\\
and
\\
Delone Laboratory of Discrete and Computational Geometry,\\
Yaroslavl State University,\\
Sovetskaya str. 14, Yaroslavl, Russia, 150000
\\
and
\\
IITP RAS,  \\
Bolshoy Karetny per. 19, Moscow, Russia, 127994\\
{\tt ilya.shkredov@gmail.com}


\begin{thebibliography}{99}



\bibitem{Bou_prod1}
{\sc J.~Bourgain, }
{\em Exponential sums estimates over subgroups and almost subgroups of $\Z_q^*$, where
    $q$ is composite with few prime factors, } Geom. Funct. Anal. {\bf 16} (2006), 327--366.


    \bibitem{Bourgain_new_sum-prod}
    {\sc J.~Bourgain, }
    {\em Multilinear exponential sums in prime fields under optimal entropy condition
    on the sources, } Geom. Funct. Anal. {\bf 18} (2009), 1477--1502.


\bibitem{BFKS}
{\sc J.~Bourgain, K.~Ford, S.~V.~Konyagin, I.~E.~Shparlinski, }
{\em On the Divisibility of Fermat Quotients, }
Michigan Math. J. {\bf 59} (2010),  313--328.


        \bibitem{BGK}
    {\sc J.~Bourgain, A.~A.~Glibichuk, S.~V.~Konyagin, }
    {\em Estimate for the number of sums and products and for exponential sums in fields of prime order, }
    J. London Math. Soc. (2) {\bf 73} (2006), 380--398.



    \bibitem{BK}
    {\sc J.~Bourgain, S.~V.~Konyagin,}
    {\em Estimates for the number of sums and products and for exponential sums over subgroups in fields of prime order, }
    CR Acad. Sci., Paris {\bf 337} (2003), no 2, 75--80.



\bibitem{Chang_Fermat}
{\sc M.-C.~Chang, }
{\em Short character sums with Fermat quotients, }
Acta Arith. {\bf 152} (2012), 23--38.


 \bibitem{Waring_Z_p}
 {\sc T.~Cochrain, C.~Pinner, }
 {\em Sum--product estimates applied to Waring's problem mod $p,$} Integers {\bf 8} (2008), 18pp.


  \bibitem{Glibichuk_zam}
  {\sc A.~A.~Glibichuk, }
  {\em Combinatorial properties of sets of residues modulo a prime and the Erd\"{o}s-Graham problem, }
  Mat. Zametki, {\bf 79} (2006), 384--395; translation in: Math. Notes {\bf 79} (2006), 356--365.


 \bibitem{Hart_A+A_subgroups}
 {\sc D.~Hart, }
 {\em A note on sumsets of subgroups in $\Z^*_p$, }
 arXiv:1303.2729v2 [math.CO] 23 May 2013.


\bibitem{H}
{\sc D.~R.~Heath--Brown, }
{\em An estimate for Heilbronn's exponential sum, }
Analytic number theory vol. {\bf 2}, (Allerton Park, IL 1995), Progr. Math., 1 39, Birkh\"{a}user, Boston (1996), 451--463.



\bibitem{H-K}
{\sc D.~R.~Heath--Brown, S.~V.~Konyagin, }
{\em New bounds for Gauss sums derived from $k$th powers, and for Heilbronn's exponential sum, }
Quart. J. Math. {\bf 51} (2000), 221--235.


\bibitem{Horn-Johnson}
{\sc R.~Horn, C.~Johnson, }
{\em Matrix Analysis, }
Cambridge University Press, Cambridge, 1985, xiii+561 pp.


    \bibitem{K_Tula} {\sc S.~V.~Konyagin,}
     {\em Estimates for trigonometric sums and for Gaussian sums, }
     IV International conference "Modern problems of number theory and its applications". Part 3 (2002), 86--114.


    \bibitem{KS1}
    {\sc S.~V.~Konyagin, I.~Shparlinski, }
    {\em Character sums with exponential functions, } Cambridge University Press, Cambridge, 1999.


\bibitem{Lenstra}
{\sc H.~W.~Lenstra, }
{\em Miller's primality test, }
Inform. Process. Lett. {\bf 8} (1979), 86--88.


\bibitem{OstShp}
{\sc A.~Ostafe, I.~E.~Shparlinski, }
{\em Pseudorandomness and dynamics of Fermat quotients, }
SIAM J. Discr. Math.   {\bf 25} (2011),  50--71.



\bibitem{Rudin_book}
{\sc W.~Rudin, } {\em Fourier analysis on groups,}  Wiley 1990 (reprint of the 1962 original).


\bibitem{ss}
{\sc T.~Schoen, I.~D.~Shkredov, }
{\em Additive properties of multiplicative subgroups of $~\F_p$, }
Quart. J. Math. {\bf 63}:3 (2012), 713--722.


\bibitem{ss_E_k}
{\sc T.~Schoen, I.~D.~Shkredov, } {\em Higher moments of convolutions, }
J. of Number Theory, {\bf 133} (2013), 1693--1737.


\bibitem{s}
{\sc I.~D.~Shkredov, }
{\em Some applications of W. Rudin's inequality to problems
of combinatorial number theory, } Uniform Distribution Theory, {\bf 6}:2 (2011), 95--116.


\bibitem{s_ineq}
{\sc I.~D.~Shkredov, } {\em Some new inequalities in additive combinatorics, }
MJCNT, accepted.


\bibitem{s_heilbronn}
{\sc I.~D.~Shkredov, } {\em On Heilbronn's exponential sum, }
Quart. J. Math., (2012), 1--10, doi: 10.1093/qmath/has037.


\bibitem{s_mixed}
{\sc I.~D.~Shkredov, }  {\em Some new results on higher energies, }
Transactions of Moscow Math. Soc., {\bf 74}:1 (2013), 1--40.


\bibitem{sv}
{\sc I.~D.~Shkredov, I.~V.~Vyugin, } {\em On additive shifts of
multiplicative subgroups,} Mat. Sbornik {\bf 203}:6 (2012), 81--100.


\bibitem{sv_heilbronn}
{\sc I.~D.~Shkredov, I.~V.~Vyugin, } {\em New bounds for Heilbronn's exponential sum, }
preprint.


\bibitem{Shp-DH}
{\sc I.~E.~Shparlinski, }
{\em On the distribution of Duffine--Hellman pairs, }
Finite Fields Appl. {\bf 8} (2002), 131--141.




\bibitem{Shp-FermVal}
{\sc I.~E.~Shparlinski, }
{\em On the value set of Fermat quotients, }
Proc. Amer. Math. Soc. {\bf 140} (2012), 1199--1206.


\bibitem{Shp-Ihara}
{\sc I. E. Shparlinski, }
{\em On vanishing Fermat quotients and a bound of the Ihara sum, }
Kodai Math. J. (to appear), arXiv:1104.3910v1 [math.NT].


    \bibitem{Stepanov} {\sc S.~A.~Stepanov, }
    {\em On the number of points on hyperelliptic curve over prime finite field, }
    IAN {\bf 33} (1969), 1171--1181.


    \bibitem{tv}
{\sc T.~Tao, V.~Vu, }{\em Additive combinatorics,} Cambridge University Press 2006.




\end{thebibliography}
\end{document}